\newtheorem{theorem}{Theorem}[section]
\newtheorem{lemma}[theorem]{Lemma}
\newtheorem{corollary}[theorem]{Corollary}
\newtheorem{claim}{Claim}[section]
\theoremstyle{definition}
\newtheorem{remark}[theorem]{Remark}
\begin{document}

\title[Best constants in inequalities]
{Best constants in inequalities involving analytic and co-analytic projections and Riesz's theorem in various function spaces}

\author{Petar  Melentijevi\'{c}}
\address{
Faculty of  Mathematics\endgraf
University of Belgrade\endgraf
Studentski trg 16\endgraf
11000 Beograd\endgraf
Serbia\endgraf}
\email{petarmel@matf.bg.ac.rs}

\author{Marijan Markovi\'{c}}
\address{
Faculty of  Sciences and Mathematics\endgraf
University of Montenegro\endgraf
D\v{z}ord\v{z}a Va\v{s}ingtona bb\endgraf
81000 Podgorica\endgraf
Montenegro\endgraf}
\email{marijanmmarkovic@gmail.com}

\begin{abstract}
Let $P_+$ be the Riesz's projection operator and let  $P_-= I - P_+$. We consider estimates of the expression $\|( |P_ +  f | ^s + |P_-  f |^s)
^{\frac 1s}\|_{L^p (\mathbf{T})}$ in terms of Lebesgue $p$-norm of  the function  $f \in L^p(\mathbf{T})$. We find the accurate estimates for
$p\ge 2$ and $0<s\le p$, thus significantly improving results from \cite{KALAJ.TAMS} where it is  considered for $s=2$ and  $1<p<\infty$. Interestingly, for this range of $s$ there holds the appropriate vector-valued inequality with the same constant.  Also,
we obtain the right asymptotic of the constants for large $s$.  This proves  the conjecture
of Hollenbeck and Verbitsky on the Riesz projection operator in some cases. As a consequence of inequalities  we have proved in the paper we get
Riesz-type theorems on conjugate  harmonic functions  for various function spaces.  In particular, slightly general version of Stout's theorem
for Lumer Hardy spaces is  obtained by a new approach.
\end{abstract}

\subjclass[2010]{Primary 30H10,  30H05; Secondary 31A05,  31B05}

\keywords{Riesz's theorem, Lumer's Hardy spaces, harmonic functions, subharmonic functions, sharp inequalities}

\maketitle

\section{Introduction}


Let $\mathbf{U}=\{z\in \mathbf{C}:|z|<1\}$ be the unit disk  and let $\mathbf{T}=\{z\in \mathbf{C}:|z|=1\}$  be the  unit circle. For  $0<p\le\infty$
we denote by $L^p(\mathbf{T})$ the  Lebesgue space  on $\mathbf{T}$. The space $H^p(\mathbf{T})$ contains  all $\varphi  \in  L^p(\mathbf{T})$   for
which  all negative  Fourier   coefficients are equal to zero, i.e.,
\begin{equation*}
\hat{\varphi}(n)  =  \frac{1}{2\pi}\int_{0}^{2\pi} \varphi (e^{it})e^{-int} dt = 0\quad \text{for every  integer}\  n<0.
\end{equation*}
The Riesz projection  operator $P_+:L^p(\mathbf{T})\to H^p(\mathbf{T})$,  and the  co-analytic projection operator  $P_- = I- P_+$,  where $I$ is the
identity operator on  $L^p(\mathbf{T})$,   operate in the following  way
\begin{equation*}
P_+f (\zeta)  =  \sum_{n=0}^{+\infty}      \hat{f}(n)  \zeta^n,\quad   P_- f(\zeta)=\sum_{n= - \infty}^{-1} \hat{f}(n) \zeta^n,
\quad f(\zeta)= \sum_{n =   -\infty}^{+\infty} \hat{f}(n) \zeta^n.
\end{equation*}

The Riesz projection operator is not bounded on $L^1(\mathbf{T})$; actually  it does not exist  a bounded projection of $L^1(\mathbf{T})$ onto
$H^1(\mathbf{T})$. This was shown by Newman, and  generalized by  Rudin.  From the Parseval identity  it  follows that the  operator $P_+$ is an
orthogonal  projection  of  $L^2(\mathbf{T})$ onto $H^2(\mathbf{T})$. Therefore,  the norm of $P_+$ as an operator from  ${L^2(\mathbf{T})}$
onto   $H^2(\mathbf{T})$ is equal to  $1$.  If   $p=\infty$, then the  Riesz projection operator  $P_+$  is also  unbounded
The question for $p\ne 2$ is more subtle.   It is a classical result that Riesz  projection $P_+$ is a bounded operator considered as   an operator
on $L^p(\mathbf{T})$  for every $1<p<\infty$. This  is  the classical result  obtained  by M. Riesz. Nowadays  there exist  many  different  proofs
of this fact.  There exists an elementary  proof of Stein  base    on Hardy--Stein equality.  But it  needed a long time to  find the exact norm of
the Riesz projection. This is solved by Hollenbeck and Verbitsky in 2000.

For a function  $f$ on $\mathbf{U}$ and $r\in (0,1)$ we denote by $f_r$ the function  $f_r (\zeta) =  f(r\zeta)$, $\zeta\in\overline{\mathbf{U}}$. The
harmonic Hardy space $h^p$, $0<p\le \infty$, consists of all  harmonic complex-valued  functions  $U$  on $\mathbf{U}$ for  which the integral mean
\begin{equation*}
M_p(U,r)  = \left\{ \int_\mathbf{T} |U_r(\zeta)|^p\frac{|d\zeta|}{2\pi}\right\}^{\frac 1p}
\end{equation*}
remains bounded as $r$ approaches $1$.  Since $|U|^p$, $1\le p<\infty$ is  subharmonic on  $\mathbf{U}$,  the integral mean $M_p(U,r)$ is increasing in
$r$. The  norm  on $h^p$ in this case  is  given by
\begin{equation*}
\|U\|_p = \lim_{r\rightarrow 1} M_p(U,r).
\end{equation*}
The analytic Hardy space $H^p$ is the subspace of $h^p$ that contains analytic functions. For the  theory of  Hardy spaces in the unit disk we refer to
classical books \cite{DUREN.BOOK, FISHER.BOOK, GARNETT.BOOK, RUDIN.BOOK} and  to the more recent  Pavlovi\'{c} book \cite{PAVLOVIC.BOOK} for new results
on the Hardy  space theory in the  unit disc. Since $|f|^p$ is subharmonic  for every $0<p<\infty$,  the norm on the   Hardy space $H^p$ may  be
introduces as
\begin{equation*}
\|f\|_p  = \lim_{ r \rightarrow 1}  M(r,f).
\end{equation*}
For $p=\infty$ the space $H^\infty$ contains all bounded analytic functions in  the unit disc.

It  is well   known that   for $f\in H^p$ the  radial  boundary value   $f^{\ast}(\zeta)= \lim_{r\rightarrow 1-} f(r\zeta)$ exists for almost every
$\zeta\in \mathbf{T}$,  $f^\ast$ belongs to the space  $H^p(\mathbf{T})$, and   we have the isometry relation $\|f^{\ast}\|_{L^p(\mathbf{T})} =
\|f\|_{H^p}$. Moreover, given $\varphi \in H^p (\mathbf{T})$, the Cauchy (and the  Poisson)  extension  of it  gives  a function  $f$ in  $H^p$ such
that   $f^\ast (\zeta ) = \varphi(\zeta)$ for almost every $\zeta\in \mathbf{T}$. Therefore, one may identify the spaces $H^p(\mathbf{T})$  and  $H^p$
via the   isometry $f\rightarrow f^\ast$.  If we identify  the Hardy space $H^p$ and  the  space  $H^p(\mathbf{T})$, then the Riesz operator   $P_+$
may be  represented as the  Cauchy    integral  (with density $f^\ast$)
\begin{equation*}
P_+ f (z) =  \frac{1}{2\pi i} \int_{\mathbf{T}} \frac{f^\ast(\zeta)}{\zeta - z} d\zeta, \quad z \in \mathbf{U}.
\end{equation*}


This paper is mainly motivated by this   conjecture of Hollenbeck  and  Verbitsky posed form  2010. Partial  results are known. Kalaj in 2017 confirmed
the conjecture in the case $s=2$.  The case $s=\infty$ is considered  in the paper \cite{HV.JFA} where the authors invented the method based on subharmonic
minorants. This metod  is   also applied quite recently  in \cite{KALAJ.TAMS} in order to prove the case $s=2$. 

Hollenbeck  and  Verbitsky \cite{HV.OTAA} posed  a  problem to   find  the optimal constant $C _{s,p }$   for the inequality
\begin{equation*}
\|  ( |P_ +  f | ^s  +  |P_-  f |^s) ^{\frac 1s}  \|_{L^p (\mathbf{T})}\le C_{s,p } \|f\|_{L^p (\mathbf{T})},
\quad f\in {L^p (\mathbf{T})}, \,    1<p<\infty, \, 0<s<\infty.
\end{equation*}
They   conjectured   that
\begin{equation*}
C _ {s,p } = \frac { 2^{\frac 1 s} }{ 2\cos \frac \pi {2p}  },  \quad   1<p<2,\  0<s<\sec^2\frac \pi  {2p},
\end{equation*}
and
\begin{equation*}
C _{s,p } = \frac { 2^{\frac 1s} }{ 2\sin \frac \pi {2p} },  \quad  2<p<\infty, \ 0<s<\csc^2\frac \pi {2p }.
\end{equation*}

The  Hollenbeck  and  Verbitsky result from \cite{HV.JFA} is that $C_{\infty, p }  = \frac  1 {\sin  \frac  \pi p}$,   $1<p<\infty$. In other words,
this result says that
\begin{equation*}
\|\max \{ |P_ + (f)|,  |P_- (f)|\} \|_{L^p  (\mathbf{T})}\le \frac  1 {\sin  \frac  \pi p} \|f\|_{L^p (\mathbf{T})},  \quad  f\in {L^p (\mathbf{T})}.
\end{equation*}
This   case  is  solved in 2000 by Hollenbeck  and  Verbitsky  and  the problem    has a  long history.
Some partial results on the norm of the Riesz  operator $P_+$ were known much earlier, obtained by  Gohberg and Krupnik. What Hollenbeck and Verbitsky
proved was known  as Gohberg   and    Krupnik  conjecture  (Gohberg   and    Krupnik   proved   some special cases of the conjecture, i.e.,  for the
numbers which may be  represented in the  form   $p=2^n$,  $n\in\mathbf{N}$).


Now  we will state the main results of the paper.

\begin{theorem}\label{TH.1}
For $1<p\le  \frac{5}{4}$  and  $0<s\le 4$ or $\frac{5}{4}<p<2$ and $0<s\le 2$ we have
\begin{equation*}
{C}_{s,p}=\frac{2^{ \frac {1}{s}}}{2\cos\frac{\pi}{2p}}.
\end{equation*}

For $p\ge 2 $ and $0<s\le p$   we have
\begin{equation*}
{C}_{s,p}=\frac{2^{ \frac {1}{s}}}{2\sin\frac{\pi}{2p}}.
\end{equation*}
\end{theorem}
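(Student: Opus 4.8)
The plan is to establish the sharp inequality
\[
\big\| (|P_+ f|^s + |P_- f|^s)^{1/s} \big\|_{L^p(\mathbf{T})} \le \frac{2^{1/s}}{2\sin\frac{\pi}{2p}} \|f\|_{L^p(\mathbf{T})}
\]
for $p \ge 2$, $0 < s \le p$ (and the analogous statement in the dual range) by the method of subharmonic minorants, which the excerpt credits to Hollenbeck and Verbitsky \cite{HV.JFA} and which was adapted in \cite{KALAJ.TAMS} for $s=2$. Writing $a = P_+ f(z)$ and $b = P_- f(z)$ as values of the analytic and co-analytic parts at an interior point $z \in \mathbf{U}$, and $f = a + \bar{b}$-type boundary data, the goal is to reduce the $L^p$ estimate to a pointwise inequality: I would seek a constant $C = C_{s,p}$ and prove that the function
\[
F(a,b) = \big(|a|^s + |b|^s\big)^{p/s} - C^p\,|a+b|^p
\]
admits a harmonic (or subharmonic) majorant of the form $\operatorname{Re} \Phi(a,b)$, or more precisely that $(|a|^s+|b|^s)^{p/s}$ is dominated by $C^p|a+b|^p$ plus the real part of an analytic/harmonic expression that integrates to a controllable quantity. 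The essential point is that after integrating over $\mathbf{T}$ the harmonic correction term contributes only through its value at the origin, which vanishes or is negative by the mean value property, leaving exactly the claimed bound.

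Concretely, I would first homogenize: since both sides scale the same way under $f \mapsto \lambda f$, it suffices to work with the normalized angular problem. Setting $a = \rho_1 e^{i\alpha}$, $b = \rho_2 e^{i\beta}$ and exploiting rotational symmetry, the pointwise inequality collapses to a two-variable (ratio and angle) inequality. The natural ansatz, following the subharmonic-minorant philosophy, is to find the largest constant $C$ for which
\[
(|a|^s + |b|^s)^{p/s} \le C^p |a+b|^p + \operatorname{Re}\big(\Psi(a,b)\big),
\]
where $\Psi$ is chosen so that the right-hand side is subharmonic in the relevant sense and the extremal configuration is $b = \bar{a}$ with a phase producing the $\sin\frac{\pi}{2p}$ factor. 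The value $\frac{1}{2\sin\frac{\pi}{2p}}$ should emerge from optimizing the angle $\beta - \alpha$: the worst case pushes $a+b$ toward cancellation along the direction controlled by $\frac{\pi}{2p}$, precisely as in the $s=\infty$ result where the constant is $\frac{1}{\sin\frac{\pi}{p}}$.

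The main obstacle is verifying the pointwise/subharmonic inequality across the \emph{entire} admissible range $0 < s \le p$ simultaneously, rather than for the single exponent $s=2$. For $s=2$ the quantity $|a|^2+|b|^2$ is smooth and the minorant computation is tractable; for general $s$ the function $(|a|^s+|b|^s)^{p/s}$ is only $C^1$ (not $C^2$) along the diagonal $|a|=|b|$ when $s<2$, and its Laplacian develops sign-changing behavior that must be controlled. I expect the key technical lemma to be a convexity or monotonicity statement in $s$ showing that the bound for a given $p$ is \emph{hardest} at the endpoint $s=p$ (or is governed by the extremal angle independently of $s$ in this range), so that establishing it there—where $(|a|^p+|b|^p)$ linearizes the outer exponent and the expression becomes more symmetric—suffices. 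Sharpness in the other direction is comparatively routine: one exhibits a boundary function (a suitable singular inner function or an explicit $f$ concentrating mass near a point) for which the ratio of the two norms tends to $\frac{2^{1/s}}{2\sin\frac{\pi}{2p}}$, confirming the constant cannot be improved. The restriction $s \le p$ should enter exactly at the point where the diagonal $|a|=|b|$ ceases to be the extremizer, marking the boundary of validity of the guessed minorant.
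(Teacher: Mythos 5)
Your outline reproduces the paper's own strategy, which is the Hollenbeck--Verbitsky subharmonic-minorant method: reduce the norm inequality to a sharp pointwise inequality for a pair of complex numbers, apply it to $z=P_+f(\zeta)$, $w=P_-f(\zeta)$, and integrate, using that the correction term is a (pluri)subharmonic function composed with an analytic function vanishing at the origin, so its integral over $\mathbf{T}$ is nonnegative; reduce $0<s\le p$ to the endpoint $s=p$ (this is the elementary power-mean inequality, exactly as the paper does); and get sharpness from concentrating test functions. So the skeleton is right. The genuine gap is that the entire mathematical content of the theorem lies in the two steps you leave as ``I would seek'' and ``I expect'': the explicit construction of the minorant and the verification of the pointwise inequality at $s=p$. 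Neither is routine. The minorant that works (imported from Kalaj's paper) is $(|z||w|)^{p/2}\cot\frac{\pi}{2p}\,v_p(\arg z+\arg w)$, where for $p\ge 4$ the inequality must be stated with $|z+i\overline{w}|^p$ (a phase rotation you would not guess without the computation) and $v_p$ is piecewise-defined with a term $\max\{|\cos\frac p2(\frac\pi2-t)|,\,|\cos\frac p2(\frac\pi2+t)|\}$; the resulting correction is subharmonic but \emph{not} the real part of any analytic function, so your ansatz $\operatorname{Re}\Psi(a,b)$ is too restrictive in exactly the hardest range. Proving the pointwise inequality then requires a stationary-point analysis reducing to $\sin\bigl((\tfrac p2-1)t\bigr)+\frac{r^s-r^2}{r-r^{s+1}}\sin\frac{tp}{2}\ge 0$, together with a monotonicity lemma giving $\frac{r^s-r^2}{r-r^{s+1}}\ge-\frac{s-2}{s}$ --- this is where the constraint $s\le p$ actually enters, via $(\frac2s-\frac2p)\sin\pi y\ge0$, not where ``the diagonal ceases to be the extremizer'' --- and, hardest of all, the boundary case $t=0$, i.e.\ the inequality $-\frac{1+r^p}{2}+\frac{(1-r)^p}{2^p\sin^p\frac{\pi}{2p}}+r^{p/2}\cot\frac{\pi}{2p}\ge0$, to which the paper devotes an entire section of case-by-case convexity and numerical estimates over subranges of $p$. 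Your proposal offers no route to any of these.

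Two further inaccuracies. First, the regularity worry is misplaced: for every finite $s$ the function $(|a|^s+|b|^s)^{p/s}$ is smooth across the diagonal $|a|=|b|$ (trouble occurs only at $a=0$ or $b=0$; it is the $s=\infty$ maximum that loses smoothness on the diagonal). Second, the first half of the theorem ($1<p\le\frac54$ with $s\le4$, and $\frac54<p<2$ with $s\le2$) is not ``the analogous statement in the dual range'': the admissible ranges of $s$ there are not dual to $s\le p$, and the paper proves that case by a separate pointwise lemma with its own constraints (the threshold $\frac54$ comes from needing $\tan\frac{\pi}{2p}\ge3$). Likewise, sharpness is not obtained from a generic concentrating function: one needs the two-parameter family $f_\gamma=\alpha\operatorname{Re}g_\gamma+i\beta\operatorname{Im}g_\gamma$ with $g_\gamma(z)=\bigl(\frac{1+z}{1-z}\bigr)^{2\gamma/\pi}$, lets $\gamma\to\frac\pi p$, and then optimizes the resulting ratio over $(\alpha,\beta)$; it is this optimization, carried out in Section 2 of the paper, that produces exactly $2^{1/s}/(2\sin\frac{\pi}{2p})$ and explains why the stated range of $s$ is what it is.
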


This    theorem contains the Kalaj's result (the case  $s=2$ in the above theorem).

We confirm  the asymptotic conjectured by Hollenbeck and Verbitsky \cite{HV.OTAA}.  This is the content of our second result given in the next theorem.

\begin{theorem}\label{TH.2}
We have
\begin{equation*}
\lim_{s \rightarrow \infty}C_{s,p}= \frac{1}{\sin\frac{\pi}{p}}
\end{equation*}
for every $1<p<\infty$.
\end{theorem}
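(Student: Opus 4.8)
The plan is to obtain the limit by a soft squeezing argument rather than by letting $s\to\infty$ in the formulas of Theorem~\ref{TH.1}; in fact the latter is impossible, since for $p\ge 2$ the exact value $C_{s,p}=2^{1/s}/(2\sin\frac{\pi}{2p})$ is only claimed for $0<s\le p$, and its naive limit $1/(2\sin\frac{\pi}{2p})$ does not agree with the conjectured $1/\sin\frac{\pi}{p}=1/(2\sin\frac{\pi}{2p}\cos\frac{\pi}{2p})$. The one external input I will use is the Hollenbeck--Verbitsky theorem recalled above, namely that the optimal constant in the endpoint ($s=\infty$) inequality is $C_{\infty,p}=1/\sin\frac{\pi}{p}$ for every $1<p<\infty$; this already contains both an upper bound (the inequality holds) and its sharpness.

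Fix $f\in L^p(\mathbf{T})$ and write $g_s=(|P_+f|^s+|P_-f|^s)^{1/s}$ and $g_\infty=\max\{|P_+f|,|P_-f|\}$ for the pointwise $\ell^s$ and $\ell^\infty$ quantities attached to the pair $(|P_+f|,|P_-f|)$. The elementary two-sided comparison of $\ell^s$ and $\ell^\infty$ norms on $\mathbf{R}^2$ gives the pointwise sandwich
\begin{equation*}
g_\infty\le g_s\le 2^{1/s}\,g_\infty,
\end{equation*}
valid almost everywhere on $\mathbf{T}$. I would then integrate these inequalities in $L^p(\mathbf{T})$. The left inequality yields $\|g_s\|_{L^p(\mathbf{T})}\ge\|g_\infty\|_{L^p(\mathbf{T})}$ for every $f$, and taking the supremum over $f$ with $\|f\|_{L^p(\mathbf{T})}=1$ shows $C_{s,p}\ge C_{\infty,p}$. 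The right inequality yields $\|g_s\|_{L^p(\mathbf{T})}\le 2^{1/s}\|g_\infty\|_{L^p(\mathbf{T})}\le 2^{1/s}C_{\infty,p}\|f\|_{L^p(\mathbf{T})}$, and again taking the supremum gives $C_{s,p}\le 2^{1/s}C_{\infty,p}$.

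Combining the two bounds produces the squeeze
\begin{equation*}
C_{\infty,p}\le C_{s,p}\le 2^{1/s}\,C_{\infty,p},
\end{equation*}
so that letting $s\to\infty$ and using $2^{1/s}\to 1$ forces $\lim_{s\to\infty}C_{s,p}=C_{\infty,p}$. Substituting the Hollenbeck--Verbitsky value $C_{\infty,p}=1/\sin\frac{\pi}{p}$ then yields the claim for every $1<p<\infty$. There is no genuine analytic obstacle once the endpoint result is in hand; the only conceptual point to get right is that the limit must be anchored to the $s=\infty$ case, and not to an extrapolation of Theorem~\ref{TH.1}, whose formula ceases to give the correct constant once $s$ exceeds $p$ (the discrepancy being exactly the factor $\cos\frac{\pi}{2p}$).
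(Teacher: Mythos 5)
Your proof is correct, and it is genuinely simpler than the paper's, chiefly in the lower bound. Both arguments establish $\limsup_{s\to\infty}C_{s,p}\le \frac{1}{\sin\frac{\pi}{p}}$ in essentially the same way: the paper takes the Hollenbeck--Verbitsky pointwise inequality $\max\{|z|^p,|w|^p\}\le \sin^{-p}\frac{\pi}{p}|z+\overline{w}|^p-b_p\,\mathrm{Re}\,(zw)^{p/2}$, multiplies by $(1+\epsilon)^p$ and chooses $s$ with $(1+\epsilon)^s\ge 2$ so that $(|z|^s+|w|^s)^{p/s}\le 2^{p/s}\max\{|z|^p,|w|^p\}\le(1+\epsilon)^p\max\{|z|^p,|w|^p\}$, then integrates using subharmonicity; your factor $2^{1/s}\to 1$ is exactly their $(1+\epsilon)$ device, applied at the level of the already-integrated norm inequality rather than the pointwise minorant inequality. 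The real divergence is in the lower bound: the paper does not invoke the sharpness half of the Hollenbeck--Verbitsky theorem, but instead reuses its own Section~2 test functions, showing that $C_{s,p}\ge \bigl(1+\tilde{t}^s\bigr)^{1/s}\bigl(1+\tilde{t}^2+2\tilde{t}\cos\frac{\pi}{p}\bigr)^{-1/2}$ where $\tilde{t}$ solves $t^{s-1}-t+\cos\frac{\pi}{p}(t^s-1)=0$, and then proving $\tilde{t}\to-\cos\frac{\pi}{p}$ as $s\to\infty$, which makes the denominator tend to $\sin\frac{\pi}{p}$. You replace all of this by the one-line observation that $g_s\ge g_\infty$ pointwise, hence $C_{s,p}\ge C_{\infty,p}$, and then quote the full strength of Hollenbeck--Verbitsky (the equality $C_{\infty,p}=1/\sin\frac{\pi}{p}$, i.e., including sharpness). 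What your route buys is brevity and transparency; what the paper's route buys is independence from the sharpness statement and, more to the point, quantitative lower bounds for each finite $s$ (recorded in the Remark of Section~2), which are what connect Theorem~\ref{TH.2} to the Hollenbeck--Verbitsky conjecture on the exact value of $C_{s,p}$ -- information your soft squeeze cannot produce. Your closing remark about why one cannot extrapolate Theorem~\ref{TH.1} (the formula $2^{1/s}/(2\sin\frac{\pi}{2p})$ is valid only for $s\le p$ and its naive limit misses the factor $\cos\frac{\pi}{2p}$) is accurate and worth keeping.
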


We will   show that  the constant we have in some inequalities and which are optimal for  functions in   Hardy space   are useful in some other
function  spaces such as Lumer's Hardy spaces,    weighted Bergman spaces and Fock    spaces. Finally, we  discuss the  vector--valued case. We
would like to cover the vector--valued case, i.e., we may  suppose  that $f$  is  vector-valued,
i.e., it may  maps  the unit  ball into  $\mathbf{C}^n$.  It is maybe  surprising that  the constants    $C_{s,p} $   remains the same.
At  the end  of the     paper  we considered the   vector-valued analytic  mappings,  where we can also use  the method of plurisubharmonic
minorants.

Let us mention that by a result of Marczinkiewicz and Zygmund we have the following vector inequalities with the same constants as above:

\begin{corollary}
For $2\leq s \leq p$ and $1<p\leq s\leq 2$	there holds the estimates

$$  \|\big(\sum_{k=1}^{\infty}|P_{\pm}f_k|^s\big)^{\frac{1}{s}}\| _{_{L^p({\mathbb{T}})}} \leq C_{s,p}   \|\bigg(\sum_{k=1}^{\infty}|f_k|^s\bigg)^{\frac{1}{s}}\|  _{L^p({\mathbb{T}})}.  $$                                                                      	
\end{corollary}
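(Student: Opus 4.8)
The plan is to separate the statement into two independent ingredients: the scalar boundedness of $P_{\pm}$ on $L^p(\mathbf{T})$, and the Marcinkiewicz--Zygmund principle that upgrades a scalar bound to an $\ell^s$-valued bound with the \emph{same} constant, precisely on the band of exponents appearing here. First I would record that the scalar estimate is already in hand: since $|P_{\pm}f|\le(|P_+f|^s+|P_-f|^s)^{\frac1s}$ pointwise, Theorem~\ref{TH.1} yields
\begin{equation*}
\|P_{\pm}f\|_{L^p(\mathbf{T})}\le C_{s,p}\,\|f\|_{L^p(\mathbf{T})},
\end{equation*}
so each of $P_+$ and $P_-$ is a bounded linear operator on $L^p(\mathbf{T})$ with norm at most $M:=C_{s,p}$. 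The content of the corollary is then that this single scalar bound $M$ is inherited, with no loss, by the $\ell^s$-valued extension $P_{\pm}\otimes I_{\ell^s}$, exactly on the two ranges $2\le s\le p$ and $p\le s\le 2$ of the statement.

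To carry this out I would observe that the two ranges are jointly described by $\min(p,2)\le s\le\max(p,2)$, and I would reach the interior exponents by interpolating between the two endpoints $s=2$ and $s=p$. At $s=2$ I would randomize: writing $g_k$ for i.i.d.\ standard complex Gaussians, $\gamma_p=(\mathbf{E}|g_1|^p)^{\frac1p}$, and using the Gaussian identity $\mathbf{E}|\sum_k g_k a_k|^p=\gamma_p^p(\sum_k|a_k|^2)^{p/2}$ together with the linearity of $T=P_{\pm}$,
\begin{equation*}
\Big\|\big(\textstyle\sum_k|Tf_k|^2\big)^{\frac12}\Big\|_{L^p}
=\gamma_p^{-1}\big(\mathbf{E}\,\|T(\textstyle\sum_k g_k f_k)\|_{L^p}^p\big)^{\frac1p}
\le M\gamma_p^{-1}\big(\mathbf{E}\,\|\textstyle\sum_k g_k f_k\|_{L^p}^p\big)^{\frac1p}
= M\Big\|\big(\textstyle\sum_k|f_k|^2\big)^{\frac12}\Big\|_{L^p},
\end{equation*}
so $T\otimes I_{\ell^2}$ has norm at most $M$. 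At $s=p$ the $\ell^p$-extension is immediate from Fubini, $\|(\sum_k|Tf_k|^p)^{\frac1p}\|_{L^p}^p=\sum_k\|Tf_k\|_{L^p}^p\le M^p\sum_k\|f_k\|_{L^p}^p$, again with norm at most $M$.

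The remaining step is complex interpolation on the vector-valued scale $[L^p(\ell^{s_0}),L^p(\ell^{s_1})]_\theta=L^p(\ell^s)$, where $\{s_0,s_1\}=\{2,p\}$ and $\frac1s=\frac{1-\theta}{s_0}+\frac{\theta}{s_1}$; as $\theta$ runs over $[0,1]$ this sweeps out exactly the stated ranges. Since $T\otimes I$ is bounded by $M$ on both endpoints, the interpolation inequality gives $\|T\otimes I_{\ell^s}\|\le M^{1-\theta}M^{\theta}=M=C_{s,p}$, and it is the equality of the two endpoint constants that forces the \emph{same} constant to survive. The passage from finite to infinite sequences is then routine by truncation and monotone convergence.

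The step I expect to require the most care is the $s=2$ randomization: one must verify the complex Gaussian identity and justify interchanging the expectation with the $L^p$-norm via Fubini, and one must confirm that Calder\'on's vector-valued interpolation genuinely applies to the operator $T\otimes I$ on the intersection of the endpoint spaces. I would also stress that the restriction $\min(p,2)\le s\le\max(p,2)$ is essential rather than cosmetic: outside this band the $\ell^s$-extension of a bounded $L^p$ operator need not preserve the norm, so the interpolation argument—and with it the constant $C_{s,p}$—breaks down.
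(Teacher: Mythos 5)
Your proposal is correct and takes essentially the same route as the paper: the paper disposes of this corollary by citing the Marcinkiewicz--Zygmund principle (constant-preserving $\ell^s$-valued extension of a bounded linear operator on $L^p$ for $s$ between $2$ and $p$), and your argument simply supplies the standard proof of that principle --- Gaussian randomization at $s=2$, Fubini at $s=p$, and Calder\'on's vector-valued complex interpolation $[L^p(\ell^2),L^p(\ell^p)]_\theta=L^p(\ell^s)$ in between --- together with the (needed, and correct) observation that $\|P_\pm\|_{L^p\to L^p}\le C_{s,p}$ by pointwise domination. In substance the two proofs coincide; yours merely makes explicit what the paper leaves to the citation.
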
	
The Riesz projection is closely related  to the Hilbert operator   $H$  which  maps a function  $u$ in harmonic Hardy space $h^p(\mathbf{U})$ to
the  harmonic  conjugate  $\tilde{u}$,   i.e., a function  such that  $u+i\tilde{u}$ is an analytic  function on the  unit disc. The classical result
of Pichorides  \cite{PICHORIDES.STUDIA} gives $\|H\| =\tan\frac{\pi}{2\tilde{p}}$, where $\tilde{p}= \min\{p,\frac{p}{p-1}\}$. It is not hard to derive
this result from  the  result of Hollenbeck and Verbitsky.

\section{Lower estimates of the constants $C_{s,p}$}

In this section we  estimate  from below for constants $C_{s,p}$   for $0<s<\infty$ and $1<p<\infty$.
We will use the family of test functions defined by
\begin{equation*}
f_{\gamma}= \alpha \mathrm {Re}\,  g_{\gamma} + i \beta \mathrm {Im}\, g_{\gamma},
\end{equation*}
where $g_{\gamma}(z)= \left(\frac{1+z}{1-z}\right)^{\frac{2\gamma}{\pi}}$.
Note that  $|\mathrm {Re}\,  g_{\gamma}|=  \tan \gamma |\mathrm {Im}\,   g_{\gamma}|$.  Hence,  if  we let  $\gamma\to  \frac{\pi}{p}$,  we obtain
\begin{equation*}
\frac{\|\left(|P_+f|^s+|P_-f|^s \right)^{\frac{1}{s}}\|_{L^p(\mathbf{T})}}{\|f\|_{L^p(\mathbf{T})}}=
\frac{\left(|\alpha+\beta|^s+|\alpha-\beta|^s\right)^{\frac{1}{s}}}{2\left(\alpha^2 \cos^2 \frac{\pi}{2p}
	+ \beta^2 \sin^2 \frac{\pi}{2p}\right)^{\frac{1}{2}}}.
\end{equation*}
Therefore, we easily conclude that
\begin{equation*}
C_{s,p} \ge \sup_{\alpha,\beta \in \mathbb{R}}
\frac{\left(|\alpha+\beta|^s+|\alpha-\beta|^s\right)^{\frac{1}{s}}}{2\left(\alpha^2 \cos^2 \frac{\pi}{2p}
	+ \beta^2\sin^2 \frac{\pi}{2p}\right)^{\frac{1}{2}}}.
\end{equation*}
From the preceding  inequality   we conclude    that it is enough to consider  the function
\begin{equation*}
T(\alpha,\beta)= \frac{\left(|\alpha+\beta|^s+|\alpha-\beta|^s\right)^{\frac{1}{s}}}
{2\left(\alpha^2 \cos^2 \frac{\pi}{2p} + \beta^2 \sin^2 \frac{\pi}{2p}\right)^{\frac{1}{2}}}.
\end{equation*}
This function is even in both variables, i.e., $T(\alpha, \beta) = T(-\alpha, -\beta)$ and  without loss of generality,  we can moreover
assume  that $|\alpha+\beta| \ge  |\alpha-\beta|$. Also, $T(\alpha,\beta)$ it is homogeneous,
so   we can set $\alpha+\beta=1$ and $\alpha-\beta=t$,   $|t|\le 1$.  Therefore, we get
\begin{equation*}\begin{split}
\frac{\left(|\alpha+\beta|^s+|\alpha-\beta|^s\right)^{\frac{1}{s}}}{2\left(\alpha^2 \cos^2 \frac{\pi}{2p} + \beta^2 \sin^2
	\frac{\pi}{2p}\right)^{\frac{1}{2}}}&
=\frac{(1+|t|^s)^{\frac{1}{s}}}{2 ((\frac{1+t}{2})^2 \cos^2 \frac{\pi}{2p} + (\frac{1-t}{2})^2 \sin^2 \frac{\pi}{2p})^{\frac{1}{2}}}
\\&=\frac{(1+|t|^s)^{\frac{1}{s}}}{\sqrt{1+t^2+2t\cos \frac{\pi}{p}}}.
\end{split}\end{equation*}
Hence, the range of the function $T(\alpha,\beta)$ is the union of the values
\begin{equation*}
\frac{(1+t^s)^{\frac{1}{s}}}{\sqrt{1+t^2+2t\cos \frac{\pi}{p}}}
\quad \text{and}\quad \frac{(1+t^s)^{\frac{1}{s}}}{\sqrt{1+t^2-2t\cos \frac{\pi}{p}}}
\end{equation*}
for $0 \le   t \le  1$.

Since, for $1<p\le 2$ we have
\begin{equation*}
\sqrt{1+t^2-2t\cos \frac{\pi}{p}} \ge  \sqrt{1+t^2+2t\cos \frac{\pi}{p}},
\end{equation*}
we may  conclude
\begin{equation*}
C_{s,p} \ge \sup_{t \in [0,1]} \frac{(1+t^s)^{\frac{1}{s}}}{\sqrt{1+t^2+2t\cos \frac{\pi}{p}}}.
\end{equation*}

Let us now consider   $F(t)=\frac{(1+t^s)^2}{(1+t^2+2t\cos\frac{\pi}{p})^s}$. We  find
$F'(t)=\frac{2s(1+t^s)}{(1+t^2+2t\cos\frac{\pi}{p})^{s+1}}[t^{s-1}-t+\cos\frac{\pi}{p}(t^s-1)]$,
and, therefore $ \mathrm {sgn}\,  F'(t) =  \mathrm {sgn}\,  \Phi(t)$,  where $\Phi(t)= t^{s-1}-t+\cos\frac{\pi}{p}(t^s-1)$.

We have  $\Phi'(t)= (s-1)t^{s-2}-1+st^{s-1}\cos\frac{\pi}{p}$ and $\Phi''(t)=(s-1)t^{s-3}(ts\cos\frac{\pi}{p}+s-2)$,  and we differ the following
two cases:

1) If $s\cos\frac{\pi}{p}+s-2 \geq 0,$ then $\Phi''(t)>0,$ $\Phi'$ is increasing and takes the values between $\Phi'(0)=-1$ and $\Phi'(1)= s-2+s\cos\frac{\pi}{p} \geq 0,$ so there is an $t_0 \in (0,1)$ such that $\Phi'(t_0)=0.$ Thus $\Phi$ decreases on $(0,t_0)$ and increases on
$(t_0,1),$ has the minimum in $t_0$ smaller than zero $(\Phi(1)=0),$ and consequently $\Phi$ is positive on $(0,\tilde{t})$ and negative on
$(\tilde{t},1).$ So, $F$ is increasing from $0$ to $\tilde{t},$ decreasing from  $\tilde{t}$ to $1,$ therefore $\max_{t \in [0,1]} F(t)=
F(\tilde{t}),$ where $\tilde{t}$ is a unique solution of the equation: $t^{s-1}-t+\cos\frac{\pi}{p}(t^s-1)=0,$ for $s>2$ and $t \in [0,1).$

2)  If $s\cos\frac{\pi}{p}+s-2 < 0,$ then for $t_1=\frac{2-s}{s\cos\frac{\pi}{p}}$ $\quad$ $\Phi''$ has the zero. Consequently $\Phi'$
increases on $[0,t_1]$ and decreases on $[t_1,1],$ so $\Phi'<0$ and $\Phi$ decreases, which gives us $\Phi(t) \geq \Phi(1)=0.$ Therefore, in
this case, $F$ increases and $\max_{t \in [0,1]} F(t)= F(1)= \frac{4^{1-s}}{\cos^{2s}\frac{\pi}{2p}}.$

The rest easily follows from the simple observation: If $p>2,$ than $\cos\frac{\pi}{p}=-\cos\frac{\pi}{p'}$ for dual exponent $p'=\frac{p}{p-1}<2.$ Consequently, we can change the appropriate signs and functions $\sin$ and $\cos$.

\begin{remark}
For the other values of $s>0$ we have the following estimates
\begin{equation*}
{C}_{s,p} \ge  \frac{2^{\frac {1}{s}}}{2\cos\frac{\pi}{2p}},\quad  2\le  s \le  \frac{1}{\cos^2 \frac{\pi}{2p}},
\end{equation*}
and
\begin{equation*}	
{C}_{s,p} \ge  \frac{(1+\tilde{t}^s)^{\frac{1}{s}}}{\sqrt{1+\tilde{t}^2+2\tilde{t}\cos\frac{\pi}{p}}},\quad s\ge  \frac{1}{\cos^2 \frac{\pi}{2p}},
\end{equation*}
where  $\tilde{t}$ is the unique zero in $(0,1)$ of the equation $t^{s-1}-t+\cos\frac{\pi}{p}(t^s-1)=0$.

Also, we have the analogous estimates from the above with the appropriate change of $\cos$ with $\sin-$function in both cases and the expressions for the constant.
\end{remark}

\section{Asymptotic of the constants $C_{s,p}$ as  $s\to \infty$ and the proof of Theorem \ref{TH.2}}
From our considerations from the previous subsection we conclude that:
\begin{equation*}
C_{s,p} \ge \frac{\big(1+\tilde{t}^s\big)^{\frac{1}{s}}}{\sqrt{1+\tilde{t}^2+2\tilde{t}\cos \frac{\pi}{p}}},
\end{equation*}
where $\tilde{t} \in (0,1)$ is a unique zero of the equation $t^{s-1}-t+\cos\frac{\pi}{p}(t^s-1)=0,$ for $s$ big
enough and $1<p<2$.

Also, analyzing the case $s\cos\frac{\pi}{p}+s-2 \geq 0,$ we see that $\Phi'(-\cos\frac{\pi}{p})=(s-1)|\cos\frac{\pi}{p}|^{s-2}-1+s|\cos\frac{\pi}{p}|^{s-1}\cos\frac{\pi}{p}=
|\cos\frac{\pi}{p}|^{s-2}(s-1-s\cos^2\frac{\pi}{p})>0,$ since $s-1-s\cos^2\frac{\pi}{p}=s-2-s\cos^2
\frac{\pi}{p}+1-s(\cos^2\frac{\pi}{p}+\cos\frac{\pi}{p})\geq 1,$ because of $s\cos\frac{\pi}{p}+s-2 \ge 0$
and $\cos\frac{\pi}{p}>0$. Therefore, $\Phi$ is positive on $(0,\tilde{t})$ and negative on $(\tilde{t},-\cos\frac{\pi}{p})$,
consequently $\tilde{t}<|\cos\frac{\pi}{p}|$. Hence, $\tilde{t}=-\cos\frac{\pi}{p}+\tilde{t}^{s-1}(1+\tilde{t}\cos\frac{\pi}{p})$
and  $\tilde{t}\rightarrow -\cos\frac{\pi}{p},$ as $s \rightarrow \infty,$ which gives:
\begin{equation*}
\liminf_{s \rightarrow \infty} C_{s,p} \geq \frac{1}{\sin\frac{\pi}{p}}.
\end{equation*}

To obtain the inequality for the same limit from the above, we evoke the following estimate obtained  in  \cite{HV.JFA}
\begin{equation*}
\max\{|z|^p,|w|^p\} \le   \frac{1}{\sin^p\frac{\pi}{p}}|z+\overline{w}|^p-b_p \mathrm {Re}\, (zw)^{\frac{p}{2}}.
\end{equation*}
We choose an arbitrary small $\epsilon>0$ and multiplies both sides by $(1+\epsilon)^p$,  we get
\begin{equation*}
(1+\epsilon)^p \max\{|z|^p,|w|^p\} \leq \frac{(1+\epsilon)^p}{\sin^p\frac{\pi}{p}}|z+\overline{w}|^p-b_p(1+\epsilon)^p \mathrm {Re}\, (zw)^{\frac{p}{2}}.
\end{equation*}

Choosing $s$ such that $(1+\epsilon)^s \ge  2,$ we have
\begin{equation*}
(|z|^s+|w|^s)^{\frac{p}{s}} \leq 2^{\frac{p}{s}} \max\{|z|^p,|w|^p\} \le  (1+\epsilon)^p \max\{|z|^p,|w|^p\}.
\end{equation*}
Hence, we obtain
\begin{equation*}
(|z|^s+|w|^s)^{\frac{p}{s}} \le  \frac{(1+\epsilon)^p}{\sin^p\frac{\pi}{p}}|z+\overline{w}|^p-b_p(1+\epsilon)^p \mathrm {Re}\,  (zw)^{\frac{p}{2}},
\end{equation*}
from which, after integrating with $z=P_+f, w=P_-f$ we get
\begin{equation*}
\limsup_{s \rightarrow \infty}C_{s,p} \le  \frac{1+\epsilon}{\sin\frac{\pi}{p}}.
\end{equation*}
Hence, we obtain
\begin{equation*}
\lim_{s \rightarrow \infty}C_{s,p}= \frac{1}{\sin\frac{\pi}{p}},
\end{equation*}
as conjectured in \cite{HV.OTAA}.

Proof in the case $p>2$ is quite similar.


\section{Proof of Theorem \ref{TH.1} }
Here we prove the result given in Theorem   \ref{TH.1}. This section is devoted to the  proof of Theorem  \ref{TH.1}.

Let us, first, note that if it is proved for some value of $s$  that then it will follow  for all smaller values of $s>0$. Namely, let us suppose that
we have proved our theorem for some $s=s_0$.  Then, if  we use inequalities between  means  of order   $s$  and of  order  $s_0$,  for  $0<s\le  s_0$
we obtain
\begin{equation*}
\left(\frac{|P_+f(\zeta)|^s+|P_-f(\zeta)|^s}{2}\right)^{\frac {p}{s}}
\le \left(\frac{|P_+f(\zeta)|^{s_0}+|P_-f(\zeta)|^{s_0}}{2}\right)^{\frac{p}{s_0}},
\end{equation*}
i.e.,
\begin{equation*}
\left(|P_+f(\zeta)|^s+|P_-f(\zeta)|^s\right)^{\frac{p}{s}} \le 2^{\frac{p}{s}- \frac{p}{s_0}}
\left(|P_+f(\zeta)|^{s_0}+|P_-f(\zeta)|^{s_0}\right)^{\frac{p}{s_0}}.
\end{equation*}
If we integrate the last inequality, we obtain
\begin{equation*}
\int_{\mathbf{T}}\left(|P_+f(\zeta)|^s+|P_-f(\zeta)|^s\right)^{\frac{p}{s}} |d\zeta|
\le 2^{\frac{p}{s}-\frac{p}{s_0}} \int_{\mathbf{T}}\left(|P_+f(\zeta)|^{s_0}+|P_-f(\zeta)|^{s_0}\right)^{\frac  p{s_0}} |d\zeta| .
\end{equation*}
Using the result for $s=s_0$, i.e., the inequality
\begin{equation*}
\int_{\mathbf{T}} \left(|P_+f(\zeta)|^{s_0}+|P_-f(\zeta)|^{s_0}\right)^{\frac p{s_0}} |d\zeta| \le
\frac{2^{\frac p{s_0}}} {2^p\cos^p \frac{\pi}{2p}} \int_{\mathbf{T}} |f(\zeta)|^p |d\zeta|,
\end{equation*}
we obtain
\begin{equation*}
\left(\int_{\mathbf{T}} \left(|P_+f(\zeta)|^s+|P_-f(\zeta)|^s\right)^{\frac ps} |d\zeta|\right)^{\frac 1p}
\le \frac{2^{\frac 1s}}{2\cos \frac{\pi}{2p}}\left\{ \int_{\mathbf{T}} |f(\zeta)|^p |d\zeta|\right\}^{\frac 1p}.
\end{equation*}
	
We can act similarly    in the case $2<p<\infty$, assuming that we have
\begin{equation*}
\left(\int_{\mathbf{T}}\left(|P_+f(\zeta)|^{s_0}+|P_-f(\zeta)|^{s_0}\right)^{\frac {p}{s_0}}|d\zeta|\right)^{\frac{1}{p}}\le \frac{2^{\frac{1}{s_0}}}{2\sin\frac{\pi}{2p}}\left(\int_{\mathbf{T}}|f(\zeta)|^p |d\zeta|\right)^{\frac{1}{p}}.
\end{equation*}
If we use again  the  inequality between means  of order   $s$ and   $s_0$ we obtain
\begin{equation*}
\int_{\mathbf{T}}\left(|P_+f(\zeta)|^s+|P_-f(\zeta)|^s\right)^{\frac ps}|d\zeta|\le 2^{\frac ps- \frac p{s_0}}
\int_{\mathbf{T}}\left(|P_+f(\zeta)|^{s_0}+|P_-f(\zeta)|^{s_0}\right)^{\frac p{s_0}}|d\zeta|,
\end{equation*}
i.e.,   the appropriate inequality for any $s \le  s_0$.

The proof of this  upper bound  for  the  $s$-norm of the Riesz projection $P_+$  and co-analytic projection $P_-$ depends on some nontrivial sharp
inequalities. We will use  some subharmonic minorants as in \cite{KALAJ.TAMS}. Therefore, our  upper estimate is a consequence of a result for
complex numbers. We use some new observations and hence, our inequality is stronger than that in \cite{KALAJ.TAMS}.

\subsection{The case  $1<p<2$}  The  following lemma is crucial.

\begin{lemma}
For   $s=4$ and  $1<p<\frac{5}{4}$   there  holds
\begin{equation*}
-\left(\frac{|z|^s+|w|^s}{2}\right)^{\frac ps}+\frac{|z+\overline{w}|^p}{2^p\cos^p \frac{\pi}{2p} } -  \tan\frac{\pi}{2p}\mathrm {Re} (zw)^{\frac  p2}
\ge 0,\quad z\in \mathbf{C},\, w\in \mathbf{C}.
\end{equation*}
If $s=2$, then this inequality is valid   in  the range $\frac{5}{4}<p<2$.
\end{lemma}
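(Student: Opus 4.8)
The plan is to reduce the two complex variables to real polar data, collapse the problem by homogeneity to two real parameters, and identify the extremal configuration with the classical one--variable Hollenbeck--Verbitsky inequality; the whole difficulty then concentrates in controlling the perturbation away from that extremal.

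First I would write $z=ae^{i\mu}$ and $w=be^{i\nu}$ with $a,b\ge 0$, and observe that the arguments enter only through $\psi:=\mu+\nu$, since $|z+\overline w|^2=a^2+b^2+2ab\cos\psi$ and $\mathrm{Re}\,(zw)^{p/2}=(ab)^{p/2}\cos\frac{p\psi}{2}$. Thus the claim becomes a statement in the three real variables $a,b\ge 0$ and $\psi\in\mathbf{R}$, homogeneous of degree $p$. Writing $c=\cos\frac{\pi}{2p}$ for brevity and normalizing $a^2+b^2=1$, I set $u:=2ab\in[0,1]$; dividing through by $(a^2+b^2)^{p/2}$ and using $a^4+b^4=(a^2+b^2)^2-\tfrac12u^2(a^2+b^2)^2$ reduces everything to proving, for all $u\in[0,1]$ and $\psi\in\mathbf{R}$,
\[
g_u(\psi):=\frac{(1+u\cos\psi)^{p/2}}{2^p c^p}-\tan\tfrac{\pi}{2p}\Big(\tfrac u2\Big)^{p/2}\cos\tfrac{p\psi}{2}\ \ge\ 2^{-p/2}R_s(u),
\]
where $R_2(u)=1$ and $R_4(u)=(2-u^2)^{p/4}$.

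Next I would treat the extremal endpoint $u=1$ (equal moduli) separately, since $R_2(1)=R_4(1)=1$ there. Putting $\psi=2x$ and using $1+\cos\psi=2\cos^2 x$, the inequality collapses to the classical one--variable estimate
\[
\frac{\cos^p x}{\cos^p\frac{\pi}{2p}}-\tan\tfrac{\pi}{2p}\cos px\ \ge\ 1,
\]
which holds for $1<p\le 2$ with equality at $x=\frac{\pi}{2p}$; this is exactly the Hollenbeck--Verbitsky/Pichorides inequality, and its equality point matches the extremal test function $\gamma\to\pi/p$ from Section~2. The real content is therefore the passage from $u=1$ to $u<1$. For fixed $u$ I would minimize $g_u$ in $\psi$: the stationarity condition $g_u'(\psi)=0$ takes the form
\[
\frac{u\sin\psi\,(1+u\cos\psi)^{p/2-1}}{2^p c^p}=\tan\tfrac{\pi}{2p}\Big(\tfrac u2\Big)^{p/2}\sin\tfrac{p\psi}{2},
\]
which determines the critical angle $\psi^\ast(u)$, and it remains to verify $g_u(\psi^\ast(u))\ge 2^{-p/2}R_s(u)$.

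The main obstacle is that this stationarity equation has no closed form, so the heart of the argument is a sufficiently sharp control of $\min_\psi g_u$. The decisive structural point is that for $s=4$ the target $2^{-p/2}(2-u^2)^{p/4}$ strictly exceeds the $s=2$ target $2^{-p/2}$ for every $u<1$, because $2-u^2>1$; hence $s=4$ imposes a genuinely stronger demand once the moduli differ, and I expect the restriction $1<p<\frac54$ to be precisely the condition under which this stronger inequality survives, while for $\frac54<p<2$ only the weaker ($s=2$) target remains attainable. I anticipate the hardest step to be the residual one--variable inequality in $u$ after eliminating $\psi$: one must show a function of $u$ is nonnegative on $[0,1]$ and tight at $u=1$, and I would attack this by Taylor expansion near $u=1$—where for $s=4$ the leading correction term should change sign exactly at $p=\frac54$—combined with a monotonicity or convexity estimate on the remainder over $[0,1)$. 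Establishing that $\frac54$ is the \emph{exact} borderline, rather than merely a sufficient bound, is the delicate quantitative part of the proof.
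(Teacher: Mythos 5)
Your reduction to the variables $(u,\psi)$ is correct and is essentially the same first step as the paper's (which normalizes $\max\{|z|,|w|\}=1$ and works with $\Phi(r,t)$ rather than normalizing $a^2+b^2=1$, an immaterial difference), and your identification of the equal-modulus case $u=1$ with the classical Verbitsky--Kalaj one-variable inequality matches the paper's treatment of its boundary case $r=1$. But everything after that point --- which is where the entire content of the lemma lies --- is only announced, never executed. In the paper, handling the region away from the extremal requires: (i) an interior critical-point analysis that uses \emph{both} stationarity equations (in modulus and in angle) to collapse the inequality at critical points to $(r-r^{s+1})\sin\bigl(t-\tfrac{tp}{2}\bigr)+(r^s-r^2)\sin\tfrac{tp}{2}\ge 0$; (ii) a separate monotonicity lemma for $\tfrac{r^s-r^2}{r-r^{s+1}}$ yielding the bound $-\tfrac{s-2}{s}$, followed by a trigonometric inequality valid for $s\le\tfrac{p}{p-1}$; and (iii) a delicate analysis of the boundary case $t=0$ (both arguments of $z,w$ summing to zero), which turns out to be the binding constraint. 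Your framing --- minimize over $\psi$ for each fixed $u$, then study the resulting function of $u$ --- gives you only one stationarity equation, so the paper's key cancellation is not even available without redoing the full two-variable analysis; replacing all of this by an unexecuted ``Taylor expansion near $u=1$ plus a monotonicity estimate on the remainder'' is a plan, not a proof.

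Worse, the quantitative premise your plan rests on is false: $p=\tfrac54$ is not an exact borderline and no Taylor coefficient at $u=1$ changes sign there. Restrict to the equality angle $\psi=\tfrac{\pi}{p}$ (where $\cos\tfrac{p\psi}{2}=0$) and set $\phi(u)=g_u(\tfrac{\pi}{p})-2^{-p/2}(2-u^2)^{p/4}$; then $\phi(1)=0$ and a direct computation using $1+\cos\tfrac{\pi}{p}=2\cos^2\tfrac{\pi}{2p}$ gives
\begin{equation*}
\phi'(1)=\frac{p}{2^{\frac p2+2}\cos^2\frac{\pi}{2p}}\Bigl(1+2\cos\tfrac{\pi}{p}\Bigr),
\end{equation*}
which is $\le 0$ precisely when $p\le\tfrac32$. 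So the first-order obstruction at the extremal sits at $p=\tfrac32$ --- exactly the conjectured sharp threshold $s<\sec^2\tfrac{\pi}{2p}$ for $s=4$ --- and a local expansion at $u=1$ can never detect $\tfrac54$. In the paper the restriction to $p\le\tfrac54$ comes from the global parts of the argument: the critical-point inequality needs $s\le\tfrac{p}{p-1}$ (i.e.\ $p\le\tfrac43$ for $s=4$), and the $t=0$ boundary case needs $\tan\tfrac{\pi}{2p}>3$, i.e.\ $p\le\tfrac{\pi}{2\arctan 3}\approx 1.2576$; the paper states explicitly that $\tfrac54$ is merely a convenient round bound for the latter. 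Consequently your scheme would stall twice: the expansion at $u=1$ would not exhibit the sign change you predict, and the actual obstruction lives at $\psi=0$ (both $z,w$ positive), a case your proposal never addresses.
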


\begin{proof}
Since of  homogeneity, if we divide it by  $\max\{|z|^p,|w|^p\}$  and if we write  the number ${\overline{zw}}/{|z|^2}$ in the  polar  form, we obtain
that   we have to prove only the following:  for    $0\le  r\le 1$ and  $-\pi\le  t \le\pi$  there holds
\begin{equation*}
-\left(\frac{1+r^s}{2}\right)^{\frac{p}{s}}+\frac{\left(1+r^2+2r\cos t\right)^{\frac{p}{2}}}{2^p
\cos^p\frac {\pi}{2p}} -r^{\frac{p}{2}}\tan\frac{\pi}{2p}\cos\frac{tp}{2} \ge 0.
\end{equation*}
Let us denote the left side by  $\Phi(r,t)$. For fixed   $0<s\le 2$, we will found critical points of this function in the interior  of $(r,t) \in
[0,1]\times  [0,\pi]$ (since  $\Phi$ is even  in    $t$)  as  zeroes   of the partial  derivatives  with respect to   $r$  and  $t$. Note that,
since of  the following inequality
\begin{equation*}\begin{split}
&-\left(\frac{1+r^s}{2}\right)^{\frac{p}{s}}+\frac{\left(1+r^2+2r\cos t\right)^{\frac{p}{2}}}{2^p
	\cos^p\frac {\pi}{2p}} -r^{\frac{p}{2}}\tan\frac{\pi}{2p}\cos\frac{tp}{2}
 \ge
\\&-\left(\frac{1+r^s}{2}\right)^{\frac{p}{s}}+\frac{\left(1+r^2+2r\cos \frac{\pi}{p} \right)^{\frac{p}{2}}}{2^p
	\cos^p\frac {\pi}{2p}}
\end{split}\end{equation*}
for $t \in [\frac{\pi}{p},\pi]$. Since the last expression is non-negative by considerations in the second section, it is enough to prove the
inequality for $t \in [0,\frac{\pi}{p}].$

The potential stationary points satisfy the following equations $ \frac{2}{p}\frac{\partial \Phi}{\partial r}(r,t)=0$ which is
equivalent with
\begin{equation*}
-r^{s-1}\left(\frac{1+r^s}{2}\right)^{\frac{p}{s}-1}
+\frac{2(r+\cos t)} {2^p\cos^p\frac{\pi}{2p}} (1+r^2+2r\cos t )^{\frac p2-1}
-r^{\frac{p}{2}-1}\tan\frac{\pi}{2p}\cos\frac{tp}{2}=0,
\end{equation*}
and  $ \frac{2}{p}\frac{\partial \Phi}{\partial t}(r,t)=0$ is equivalent to the following one
\begin{equation*}
\frac{2}{p}\frac{\partial \Phi}{\partial t}(r,t)=\frac{-2r\sin t}{2^p\cos^p\frac{\pi}{2p}}
 (1+r^2+2r\cos t)^{\frac{p}{2}-1}+r^{\frac{p}{2}}
\tan\frac{\pi}{2p}\sin\frac{tp}{2}=0.
\end{equation*}
From the second equation we have
\begin{equation*}
\frac{ (1+r^2+2r\cos t  )^{\frac p2-1}}{2^pr^{\frac p2}\cos^p\frac{\pi}{2p}\tan\frac{\pi}{2p}}=
\frac{\sin\frac{tp}{2}}{2r\sin t}.
\end{equation*}
Now, using this equality, we transform the expression for $\frac{2}{p}\frac{\partial \Phi}{\partial r}(r,t)=0$ in the following way
\begin{equation*}\begin{split}
&-r^{s}\left(\frac{1+r^s}{2}\right)^{\frac{p}{s}-1}+\frac{(r+\cos t)r^{\frac p2}
\tan\frac{\pi}{2p}\sin\frac{tp}2}{\sin t}-r^{\frac p2}\tan\frac{\pi}{2p}\cos\frac{tp}{2}
\\&=-r^{s}\left(\frac{1+r^s}{2}\right)^{\frac ps-1}+\frac{r^{\frac p2} \tan\frac{\pi}{2p}}{\sin t}
 \left((r+\cos t)\sin\frac{tp}{2}-\sin t\cos\frac{tp}{2}\right)
\\&=-r^{s}\left(\frac{1+r^s}{2}\right)^{\frac{p}{s}-1}+
\frac{r^{\frac p2}\tan\frac{\pi}{2p}}{\sin t}\bigg(r\sin\frac{tp}{2}-\sin\big(t-\frac{tp}{2}\big)\bigg)=0.
\end{split}\end{equation*}
In the   critical   points   the   function  $\Phi $  is equal to the following expressions
\begin{equation*}\begin{split}
\Phi(r,t)
& = -\frac{r^{\frac{p}{2}}\tan{\frac{\pi}{2p}}}{2\sin t}
\left(r\sin\frac{tp}{2}-\sin\big(t-\frac{tp}{2}\big)\right)(1+r^{-s}) \\&+r^{\frac{p}{2}}\tan\frac{\pi}{2p}\frac{(1+r^2+2r\cos t)\sin\frac{tp}{2}}{2r\sin t} -r^{\frac p2} \tan\frac{\pi}{2p}\cos\frac{tp}{2}
\\&=\frac{r^{\frac{p}{2}}\tan\frac{\pi}{2p}}{2r\sin t} \bigg[-(r+r^{1-s})\bigg(r\sin\frac{tp}{2}-\sin\big(t-\frac{tp}{2}\big)\bigg)
\\&+(1+r^2+2r\cos t) \sin\frac{tp}{2}-2r\cos\frac{tp}{2}\sin t \bigg]
\\&=\frac{r^{\frac p2} \tan\frac{\pi}{2p}}{2r\sin t}
\bigg[-(r+r^{1-s})\bigg(r\sin\frac{tp}{2}-\sin\big(t-\frac{tp}{2}\big)\bigg)
\\&+(1+r^2)\sin\frac{tp}{2}-2r\sin\big(t-\frac{tp}{2}\big)\bigg].
\end{split}\end{equation*}

In    the  critical    points  we have  $\Phi(r,t)\ge   0$  only if there holds the inequality
\begin{equation*}
 \left(r^{1-s}-r\right)\sin\big(t-\frac{tp}{2}\big)+\left(1-r^{2-s}\right)\sin\frac{tp}{2}\ge 0.
\end{equation*}
If we multiply it   by the factor  $r^s$,    we obtain an equivalent one
\begin{equation*}
 (r-r^{s+1} )\sin \left(t-\frac{tp}{2}\right)+\sin\frac{tp}{2}\left(r^s-r^2\right)\ge 0 .
\end{equation*}
But this inequality is  obviously  true  if $0<s<2$, since  we have
\begin{equation*}
r-r^{s+1}\ge 0,\quad  \sin\left(t-\frac{tp}{2}\right)\ge 0,\quad   \sin\frac{tp}{2}\ge  0,\quad   r^s-r^2\ge 0
\end{equation*}
if   the numbers    $p$, $r$  and  $t$  belong to the    intervals.

Now, we have to check the inequality at the boundary point of the rectangle $(r,t) \in [0,1]\times  [0,\frac{\pi}{p}]$.
(This is so, because we prove the main inequality for $t \in [\frac{\pi}{p},\pi]$.) Also, we easily see that the case of
$t=\frac{\pi}{p}$ follows from the results of the second section.

For  $t=0,$  the  inequality we want  to prove is
\begin{equation*}
-\left(\frac{1+r^s}{2}\right)^{\frac{p}{s}}+ \frac{(1+r)^p}{2^p\cos^p\frac{\pi}{2p}}-r^{\frac{p}{2}}\tan\frac{\pi}{2p} \ge  0.
\end{equation*}
Because of
\begin{equation*}\begin{split}
& -\left(\frac{1+r^s}{2}\right )^{\frac ps}+ \frac{(1+r)^p}{2^p\cos^p\frac{\pi}{2p}}-r^{\frac p2}\tan\frac{\pi}{2p} \ge
 -\left(\frac{1+r^2}{2}\right)^{\frac p2}+ \frac{(1+r)^p}{2^p\cos^p\frac{\pi}{2p}}-r^{\frac p2}\tan\frac{\pi}{2p}=\tilde{F}(r),
\end{split} \end{equation*}
it is enough  to show that   $\tilde{F}(r) \ge 0$,  which is equivalent to
\begin{equation*}\begin{split}
F(r)=\frac{\tilde{F}(r)}{(1+r)^p} = - \left(\frac{1+r^2}{2(1+r)^2}\right)
^{\frac {p}{2}}+\frac{1}{2^p\cos^p\frac{\pi}{2p}}-\left(\frac{r}{(1+r)^2}\right)^{\frac{p} {2}}
\tan\frac{\pi}{2p} \ge0.
\end{split}\end{equation*}

We will  prove  that  $F$  is decreasing. From   $F(r)\ge F(1)$,  i.e.,  from  $F(1) \ge  0$ (this may be considered as  $r=1$)
follows the desired conclusion. Indeed,
\begin{equation*}
F'(r)=\frac{p(r-1)}{2(1+r)^3} \left(-\left(\frac{1+r^2}{2(1+r)^2}\right)
^{\frac{p}{2}-1}+\left(\frac{r}{(1+r)^2}\right)^{\frac{p}{2}-1}\tan\frac{\pi}{2p}\right) \le 0,
\end{equation*}
since
\begin{equation*}
-\left(\frac{1+r^2}{2(1+r)^2}\right)^{\frac{p}{2}-1}+\left(\frac{r}{(1+r)^2}\right)^{\frac{p}{2}-1}\tan\frac{\pi}{2p} \ge  0
\end{equation*}
is equivalent to
\begin{equation*}
\left(\frac{1+r^2}{2r}\right)^{\frac{p}{2}-1} \le  \tan\frac{\pi}{2p},
\end{equation*}
which is correct,  since for  considered  values of   $r$ and  $p$ holds  $\tan\frac{\pi}{2p}\ge  1$ and $\frac{1+r^2}{2r}\ge 1$,
while  $\frac{p}{2}-1 \le 0$.

On the other hand,  for  $r=0$  we have
$-2^{-\frac {p}{s}}+\frac{1}{2^p\cos^p\frac{\pi}{2p}} \ge 0$,
which follows from the Claim 5.1. in the fifth section applied for $p'=\frac{p}{p-1}$ ans $s=p'.$

For  $r=1$  we have  the  inequality
\begin{equation*}
-1+\left(\frac{\cos  \frac{t}{2}}{\cos \frac{\pi}{2p}}\right)^p-\tan\frac{\pi}{2p}\cos\frac{p t }{2} \ge 0,
\end{equation*}
whose proof can be found in \cite{KALAJ.TAMS} or \cite{VERBITSKY.ISSLED}. This is the proof for $s\le  2$.

In the range of $1<p<\frac{5}{4}$ we will se that we can prove the inequality with $s=4$.  This case requires more detailed analysis.
We will proceed in the following way. First, we can repeat a proof line by line until we get that for potential stationary points, we have
\begin{equation*}
\left(r-r^{s+1}\right)\sin\big(t-\frac{tp}{2}\big)+\sin\frac{tp}{2}\left(r^s-r^2\right)\ge 0 .
\end{equation*}

By the Lemma at the end of this section, after dividing both sides of the last inequality by $r-r^{s+1}$, and using the monotonicity in
$r$ we have to prove
\begin{equation*}
\sin\left(t-\frac{tp}{2}\right)-\frac{s-2}{s}\sin\frac{tp}{2}\ge 0 .
\end{equation*}

To prove this inequality, because of $t \in [0,\frac{\pi}{p}],$, we can write $t=\frac{\pi y}{p},$ for $y \in [0,1]$.  So, we will prove
\begin{equation*}
F(y)=\sin\big(\frac{\pi y}{p}-\frac{\pi y}{2}\big)-\frac{s-2}{s}\sin\frac{\pi y}{2}\ge 0 .
\end{equation*}

But, this follows from  $F'(y)=\big(\frac{\pi}{p}-\frac{\pi}{2}\big)\cos\big(\frac{\pi y}{p}-\frac{\pi y}{2}\big)-\frac{\pi(s-2)}{2s}\cos\frac{\pi y}{2} \geq \big(\frac{\pi}{p}-\frac{\pi}{2}\big)\cos\frac{\pi y}{2}-\frac{\pi(s-2)}{2s}\cos\frac{\pi y}{2} =\pi\big(\frac{1}{p}-\frac{s-1}{s}\big)\sin\frac{\pi y}{2}\geq 0,$
for $1<p\leq \frac{5}{4}.$  Hence, $F(y)\geq F(0)=0,$ which was to be shown. In fact, this works for all $p \in (1,2)$ and $s\leq \frac{p}{p-1},$ but our final conclusion depends on the corresponding inequality for $t=0.$

Checking the inequalities for $r=0$ and $r=1$ is similar or the same
as in the first case and we omit its proof. Hence, the only non-trivial part of the proof for the boundary points is for $t=0.$

Dividing both sides of the inequality
\begin{equation*}
-\left(\frac{1+r^4}2\right)^{\frac p4} + \frac{(1+r)^p }{2^p\cos^p\frac{\pi}{2p}} - r^{\frac p2}\tan\frac\pi{2p}\ge 0
\end{equation*}
by  $(1+r)^p$ we get
\begin{equation*}
F(r) = -\left(\frac{1+r^4}{2(1+r)^4}\right)^{\frac  p4} + \frac1{2^p\cos^p\frac{\pi}{2p}} - \left(\frac {\sqrt{r}}{(1+r)}\right)^p
\tan\frac\pi{2p}\ge 0.
\end{equation*}
We find
\begin{equation*}\begin{split}
F'(r)  & = -\frac p4 \left(\frac{1+r^4}{2(1+r)^4}\right)^{\frac p4-1}\frac{4(r^3-1)}{2(1+r)^5}
-p \left(\frac {\sqrt{r}}{1+r}\right)^{p-1}  \frac{\frac 1{2\sqrt{r}}(1+r) -\sqrt{r} }{(1+r)^2} \tan\frac\pi{2p}
\\&= \frac{pr^{\frac p2-1} (1-r)}{2(1+r)^{p+1}} \left( \left(\frac{1+r^4}{2r^2}\right)^{\frac p4-1} (1+r+ r^{-1})
-  \tan\frac\pi{2p}\right).
\end{split}\end{equation*}
Let us prove that for  $1<p\le \frac 54$ the expression inside the brackets is non-positive.
First note that
\begin{equation*}
\left(\frac{r^2+r^{-2} }{2}\right)^{\frac p4-1} (1+r+ r^{-1})\le 3,
\end{equation*}
because for  $r+  r^{-1} = y $ it becomes
\begin{equation*}
\left(\frac{ y^2}2-1\right)^{\frac  p4-1} (1+y) = g(y)\le 3
\end{equation*}
for which we have
\begin{equation*}\begin{split}
g'(y)
= \left( \frac {y^2}2-1\right)^{\frac p4-2}    \left( \left(\frac p4-\frac 12\right)y^2 - \left(\frac p4-1\right)y -1\right)\le0
\end{split}\end{equation*}
since for
\begin{equation*}
h(y) =  \left(\frac p4-\frac 12\right)y^2 - \left(\frac p4-1\right)y -1 = - \left(\frac 12-\frac p4\right) \left(y - \frac{4-p}{4-2p}\right)^2 + \frac{p^2-7p+14}{2-p}
\end{equation*}
we have $h(y)\le h(2) = \frac p2-4<0$, this is because $2\ge \frac{4-p}{4-2p}$ which is satisfied since $p\le \frac 43$.

So, we indeed have
\begin{equation*}
\left(\frac {r^2+r^{-2}}{2}\right)^{\frac p4-1} (1+r+r^{-1})\le 3
\end{equation*}

For $1<p\le \frac 54$ and $\tan \frac\pi {2p}  \ge \tan \frac{2\pi}5 = \sqrt{5+2\sqrt{5}}>3$;
therefore
$F'(r)\le 0$,  i.e. $F$ is monotone decreasing: It follows $F(r) \ge F(1)$.
Since
$2^p F(1) = -1+ \frac 1{\cos^p\frac\pi {2p}}  - \tan \frac \pi {2p}\ge 0$, which follows from
the inequality for $r=1$, we complete the proof for  $1<p\le \frac 54$. (In fact, one see that the conclusion also holds
for $p\leq \frac{\pi}{2\arctan 3}\approx 1,257597$, but we work with $\frac{5}{4},$ as this is just small numerical improvement.)
\end{proof}

Now we can prove our theorem for small $p'$s.

\begin{proof}[Proof of the First Part of Theorem \ref{TH.1}]
If we now  apply the   above lemma    for the      $z=P_+f(\zeta)$ and  $w=P_- f(\zeta)$, we obtain
\begin{equation*}\begin{split}
&-\left\{\frac{|P_+f(\zeta)|^s+|P_-f(\zeta)|^s}{2}\right\}^{\frac ps}+\frac{|P_+f(\zeta)+\overline{P_-f(\zeta)}|^p}
{2^p\cos^p\frac {\pi}{2p}}  -\tan\frac{\pi}{2p} \mathrm{Re} (P_+f(\zeta)P_-f(\zeta))^{ \frac{p}{2}} \ge 0,
\end{split}\end{equation*}
i.e.,
\begin{equation*}\begin{split}
\left(|P_+f(\zeta)|^s+|P_-f(\zeta)|^s\right)^{\frac ps}&\le \frac{2^{\frac {p}{s}}}{2^p\cos^p\frac {\pi}{2p}}|f(\zeta)|^p
-2^{\frac{p}{s}}\tan\frac{\pi}{2p} \mathrm{Re}(P_+f(\zeta)P_-f(\zeta))^{\frac p2}.
\end{split}\end{equation*}
If we integrate the above  inequality over   $\mathbf{T}$,   and if we have in mind  the inequality
\begin{equation*}
\int_{\mathbf{T}}  \Re  (P_+f(\zeta)P_-f(\zeta))^{\frac{p}{2}}|d\zeta| \ge 0
\end{equation*}
(because of subharmonicity) we obtain
\begin{equation*}\begin{split}
\int_{\mathbf{T}} (|P_+ f(\zeta)|^s+|P_-f(\zeta)|^s)^{\frac ps}|d\zeta| \le \frac{ 2^{\frac ps} }{ 2^p\cos^p\frac {\pi}{2p} }
\int_\mathbf{T} |f(\zeta)|^p|d\zeta|,
\end{split}\end{equation*}
which is the statement of the theorem.
\end{proof}

\subsection{The case $p \ge  2$}

Similarly as in the previous case, our proof depends on the certain elementary inequality.
Therefore, we will prove the inequality (with $s=p$):

\begin{lemma}There holds the following estimate:
\begin{equation*} 
\left( \frac {|z|^s + |w|^s}2\right)^{\frac ps}
\le \frac{|z+i\overline {w}|^p }{2^p \sin^p\frac {\pi}{2p}} - (rR)^{\frac p2}
\mathrm {ctg} \frac{\pi}{2p}v_p (u+t),
\end{equation*}
where $u = \arg z$ and $t= \arg w$ and
\begin{equation*} v_p (t)  =
\begin{cases}
-\cos\frac p{2}(\frac \pi2- |t|), & \mbox{if}\  \frac \pi2-\frac {2\pi}p<|t|\le \frac \pi2; \\
\max\{|\cos\frac p{2}(\frac \pi2- t)|, |\cos\frac p{2}(\frac \pi2+t)|\}, & \mbox{if}\ |t|\le \frac \pi2-\frac {2\pi}p.
\end{cases}
\end{equation*}
and  $v_p(t) = v_p (\pi - |t|)$ for $\frac \pi 2\le |t|\le \pi$,
for $p\geq 4$, while for $2\leq p \leq 4,$ we have:
\begin{equation*} 
\left( \frac {|z|^s + |w|^s}2\right)^{\frac ps}
\le \frac{|z+\overline {w}|^p }{2^p \sin^p\frac {\pi}{2p}} - (rR)^{\frac p2}
\mathrm {ctg} \frac{\pi}{2p}v_p (u+t),
\end{equation*}
with $v_p(t)=-\cos\frac{p}{2}(\pi-|t|),$ $|t|\leq \pi.$
\end{lemma}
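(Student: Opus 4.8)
The plan is to reduce everything to a two-variable inequality by exploiting $s=p$. Since $\bigl(\tfrac{|z|^s+|w|^s}{2}\bigr)^{p/s}=\tfrac{|z|^p+|w|^p}{2}=\tfrac{r^p+R^p}{2}$ when $s=p$ (here $r=|z|$, $R=|w|$), and since writing $z=re^{iu}$, $w=Re^{it}$ gives $|z+\overline w|^2=r^2+R^2+2rR\cos\theta$ with $\theta=u+t$ (respectively $|z+i\overline w|^2=r^2+R^2+2rR\sin\theta$ in the rotated case), the whole inequality depends on the two angles only through $\theta$, exactly the variable fed into $v_p$. The inequality is homogeneous of degree $p$ in $(r,R)$, symmetric under $z\leftrightarrow w$, and even in $\theta$, so I would normalize $rR=1$ by setting $r=e^{x}$, $R=e^{-x}$ with $x\ge0$ and restrict to $\theta\in[0,\pi]$. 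Using $r^2+R^2+2rR\cos\theta=4\bigl(\cosh^2x-\sin^2\tfrac\theta2\bigr)$, the case $2\le p\le4$ becomes the clean statement
\[
\cosh(px)\le\frac{\bigl(\cosh^2x-\sin^2\tfrac\theta2\bigr)^{p/2}}{\sin^p\frac{\pi}{2p}}+\mathrm{ctg}\frac{\pi}{2p}\,\cos\frac p2(\pi-\theta),\qquad x\ge0,\ \theta\in[0,\pi].
\]

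I would then attack this by the same mechanism used in the preceding lemma. Denoting by $G(x,\theta)$ the difference (right minus left), I would locate interior stationary points by solving $\partial_xG=\partial_\theta G=0$, use the $\theta$-equation to eliminate the power term from the $x$-equation, and show that at every such point the residual value of $G$ collapses to an elementary trigonometric inequality of the type already met for $1<p<2$. The boundary contributions are then checked separately: the diagonal $x=0$ (i.e.\ $r=R$) gives
\[
\frac{\cos^p\frac\theta2}{\sin^p\frac{\pi}{2p}}+\mathrm{ctg}\frac{\pi}{2p}\,\cos\frac p2(\pi-\theta)\ge1,
\]
which is the inequality saturated by the extremal family of Section~2 and which I expect to be the genuinely sharp case; the limit $x\to\infty$ reduces to the one-line estimate $2^{\,p-1}\sin^p\frac{\pi}{2p}\le1$ (an equality at $p=2$); and the endpoints $\theta=0,\pi$ are covered by the scalar inequalities recorded in Section~2 and in \cite{HV.JFA,KALAJ.TAMS}.

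For $p\ge4$ the new feature is the branch structure of $(zw)^{p/2}$: as $\theta$ ranges over $[-\pi,\pi]$ the phase $\tfrac p2\theta$ sweeps more than one period, so a single cosine is no longer an admissible subharmonic minorant of $\mathrm{Re}(zw)^{p/2}$. This is exactly what the two devices in the statement compensate for — the quarter-turn $z+\overline w\mapsto z+i\overline w$ (which shifts the effective angle by $\tfrac\pi2$, replacing $\cos\theta$ by $\sin\theta$ as computed above) and the piecewise definition of $v_p$, which on each subinterval selects the branch giving the largest admissible minorant. I would therefore partition $[0,\pi]$ at the breakpoints $\theta=\tfrac\pi2-\tfrac{2\pi}{p}$ and $\theta=\tfrac\pi2$ dictated by the definition of $v_p$, run the same stationary-point-plus-boundary analysis on each piece, and verify continuity of the minorant across the breakpoints so that the pieces assemble into a global inequality.

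The main obstacle is the tension between the oscillating factor $\cos\frac p2(\pi-\theta)$ and the convex power term $\bigl(\cosh^2x-\sin^2\tfrac\theta2\bigr)^{p/2}$: near the sign changes of the cosine these two must be balanced delicately, and one must confirm that the piecewise $v_p$ is precisely the subharmonic minorant keeping the pointwise inequality sharp with the constant $1/(2\sin\frac{\pi}{2p})$ rather than a larger one. Concretely, the real work is to show that $G$ cannot have an interior stationary point at which it is negative — that is, to reduce the two stationary-point equations to a single manageable trigonometric inequality, as was done for $1<p<2$. Once that reduction succeeds, the boundary checks at $x=0$ and $x\to\infty$ are comparatively routine.
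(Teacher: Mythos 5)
Your reduction is correct and your interior strategy is essentially the paper's own: homogeneity, dependence on the angles only through $\theta=u+t$, stationary-point analysis in which the $\theta$-equation is used to eliminate the power term, then boundary checks. The hyperbolic normalization $r=e^{x}$, $R=e^{-x}$ is a clean rewriting of what the paper does with $z=1$, $w=re^{it}$. At an interior critical point the paper reduces the inequality to
\[
\sin\Bigl(\tfrac p2-1\Bigr)t+\frac{r^{s}-r^{2}}{r-r^{s+1}}\,\sin\frac{tp}{2}\ \ge\ 0,
\]
and then needs a separate technical lemma (that $\frac{r^{s}-r^{2}}{r-r^{s+1}}$ is decreasing in $r$, hence bounded below by $-\frac{s-2}{s}$) together with concavity of the sine to conclude; you did not carry this step out, but your description of the mechanism is the right one, so I regard that part as incomplete rather than wrong.

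The genuine gap is in the boundary analysis. You assert that the endpoints $\theta=0,\pi$ are ``covered by the scalar inequalities recorded in Section~2 and in \cite{HV.JFA,KALAJ.TAMS}''. For $\theta=\pi$ this is false, and that endpoint is precisely the crux of the lemma. In your variables it reads
\[
\cosh(px)\ \le\ \frac{\sinh^{p}x}{\sin^{p}\frac{\pi}{2p}}+\mathrm{ctg}\,\frac{\pi}{2p},
\]
equivalently, with $r=e^{-2x}$,
\[
-\frac{1+r^{p}}{2}+\frac{(1-r)^{p}}{2^{p}\sin^{p}\frac{\pi}{2p}}+r^{\frac p2}\,\mathrm{ctg}\,\frac{\pi}{2p}\ \ge\ 0.
\]
This is not among the Section~2 inequalities (those only yield the configurations at angle $\frac\pi p$, coming from the lower-bound analysis), and it does not follow from Kalaj's $s=2$ result: since $s=p\ge 2$, the power-mean inequality gives $\cosh(px)\ge(\cosh 2x)^{p/2}$, so the present inequality is \emph{strictly stronger} than its $s=2$ analogue. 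The paper itself identifies this as the main constraint of the whole method and devotes all of Section~5 to it --- a case-by-case analysis over subranges of $p$ (namely $p\ge 7$, $[6,7]$, $[5,6]$, down to $[2,\tfrac52]$) with convexity arguments and explicit numerical estimates, plus Claim~5.1; that same claim is also what your ``one-line'' limit $x\to\infty$, i.e.\ $2^{p-1}\sin^{p}\frac{\pi}{2p}\le 1$, actually rests on. (The diagonal $x=0$ is fine: there the $s=p$ and $s=2$ left-hand sides coincide, so Kalaj's inequality does cover it.) Without a proof of the $\theta=\pi$ endpoint your argument does not close.
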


\begin{proof} \textbf{Case} $p\geq 4$
Because of homogeneity it is enough to prove this inequality for  $z=1$ and
$w= re^{it}$.  Hence, we have to prove
\begin{equation*}
\left(\frac{1 + r^s}2\right)^{p/s}\le \frac{(1+r^2-2r \sin t)^{\frac  p2}}{2^p \sin^p\frac \pi{2p}} - r ^{\frac p2}
\mathrm {ctg} \frac{\pi}{2p}v_p ( t),
\end{equation*}
with $v_p$ as it is defined in the formulation of the lemma.

Note that for $\frac \pi 2 \le|t| \le\pi$ we have
\begin{equation*}\begin{split}
\frac{(1+r^2-2r \sin t)^{\frac p2}}{2^p \sin^p\frac \pi{2p}} - r ^{\frac p2}
\mathrm {ctg} \frac{\pi}{2p}v_p ( t) &\geq \frac{(1+r^2-2r \sin |t|)^{\frac p2}}{2^p \sin^p\frac \pi{2p}} - r^{\frac p2}
\mathrm {ctg} \frac{\pi}{2p} v_p ( |t|)
\\&=   \frac{(1+r^2-2r \sin (\pi-|t|))^{\frac p2}}{2^p \sin^p \frac \pi{2p} } - r ^{\frac p2}
\mathrm {ctg} \frac{\pi}{2p}v_p ( \pi - |t|)
\end{split}\end{equation*}
so it is enough to prove our inequality for $|t|\le \frac \pi 2.$ Also, since $(1+r^2-2r \sin|t|)^{\frac p2}\leq (1+r^2-2r \sin t)^{\frac p2},$ we further reduce to the case $0 \leq t\leq \frac{\pi}{2}.$

Inequality has the form
\begin{equation*}
\left(\frac {1 + r^s}2\right)^{\frac p s}\le \frac{(1+r^2-2r \sin t)^{\frac p2}}{2^p \sin^p\frac\pi{2p}} + r ^{\frac p2}
\mathrm {ctg} \frac{\pi}{2p}  \cos\frac p{2}(\frac \pi2- |t|),
\end{equation*}
for $\frac \pi 2 - \frac {2\pi}p \leq t \leq \frac{\pi}{2},$ i. e. after introducing the change of variables $y=\frac \pi2-t$ we have the inequality
\begin{equation*}
0 \le  - \left(\frac {1 + r^s}2\right)^{\frac ps}+ \frac{(1+r^2-2r \cos t)^{\frac p2}}{2^p \sin^p\frac \pi{2p}} + r ^{\frac p2}
\mathrm {ctg} \frac{\pi}{2p}  \cos\frac {tp }{2}
\end{equation*}
for $0\le t\le \frac{2\pi}p$.

For $t\le \frac \pi 2 - \frac {2\pi}p  $ the  inequality we consider has the form
\begin{equation*}
\left(\frac {1 + r^s}2\right)^{\frac ps}\le
\frac{(1+r^2-2r \sin t)^{\frac p2}}{2^p \sin^p\frac \pi{2p}} -  r ^{\frac p2}
\mathrm {ctg} \frac{\pi}{2p} \max\{|\cos\frac p{2}(\frac \pi2- t)|, |\cos\frac p{2}(\frac \pi2+t)|\}.
\end{equation*}

It is easy to see that for this range of  $t$  the expression
$\frac{(1+r^2-2r \sin t)^{\frac p2}}{2^p \sin^p\frac \pi{2p}}$
has its minimum for  $t=\frac \pi 2-\frac {2\pi}p$, while
\begin{equation*}
-  r ^\frac p2  \mathrm {ctg} \frac{\pi}{2p}
\max\{|\cos \frac p2 (\frac \pi 2- t)|, |\cos\frac p2(\frac \pi 2+t)|\}
\ge  -r ^{\frac p2}  \mathrm {ctg} \frac{\pi}{2p}
\end{equation*}
so our inequality will follow from this one:
\begin{equation*}
\left( \frac {1 + r^s}2\right)^{\frac ps}\le  \frac{(1+r^2-2r \cos \frac{2\pi}{p})^{\frac p2}}{2^p \sin^p\frac \pi{2p}} -  r ^{\frac p2}
\mathrm {ctg} \frac{\pi}{2p},
\end{equation*}
which is exactly the  inequality which we intend to prove in the following lines (for $t=\frac {2\pi}{p}$).

Let us denote
\begin{equation*}
\Phi (r,t)=   - \left( \frac{1 + r^s}2\right)^{\frac ps} +  \frac{(1+r^2-2r \cos t)^{\frac p2}}{2^p \sin^p\frac \pi{2p}} +  r^{\frac p2}
\mathrm {ctg} \frac{\pi}{2p}\cos\frac{tp}{2}.
\end{equation*}

By the above arguments, we have to prove this inequality for $0\le r\le 1$ and $t \in [0,\frac{2\pi}{p}]$.  We have
\begin{equation*}\begin{split}
\frac{\partial\Phi}{\partial r} (r,t) &=  - \frac{p}2 \left(\frac{1 + r^s}2\right)^{\frac ps-1} r^{s-1} +\frac p2
\frac{(1+r^2-2r \cos t)^{\frac p2-1}}{2^p \sin^p \frac \pi{2p}}(2r-2\cos t)
 + \frac p2 r ^{\frac p 2-1} \mathrm {ctg} \frac{\pi}{2p} \cos\frac{ tp}2
\end{split}\end{equation*}
and
\begin{equation*}
\frac{\partial\Phi}{\partial t} (r,t)=  \frac p2 \frac{(1+r^2-2r \cos t)^{\frac p2-1}}{2^p \sin^p\frac \pi{2p}}  2r \sin t- \frac p2
r ^{\frac p2} \mathrm {ctg} \frac{\pi}{2p} \sin \frac{pt}2
\end{equation*}
For potential stationary points, we have $\frac{\partial\Phi}{\partial r}(r,t)= 0 $ and $\frac{\partial\Phi}{\partial t} (r,t)= 0$, from which we get
\begin{equation*}
- \left(\frac {1 + r^s}2\right)^{\frac ps}  =  - \frac {1 + r^s}2 r^{1-s} \left(  r ^{\frac p2-1}
\mathrm {ctg} \frac{\pi}{2p} \cos\frac{ tp}2 \right)
\end{equation*}
Plugging this and the expression for   $\frac{(1+r^2-2r \cos t)^{\frac  p2-1}}{2^p \sin^p\frac \pi{2p}}$
from the equality  $\frac{\partial\Phi}{\partial t}(r,t) =0 $ we have
\begin{equation*}\begin{split}
\Phi (r,t) & =- \frac {1 + r^s}2 r^{1-s}\left(\frac{r^{\frac p2} \mathrm {ctg} \frac \pi{2p}
	\sin \frac {tp}2}{2r \sin t}   (2r-2\cos t )+r^{\frac p2-1} \mathrm {ctg} \frac \pi{2p}
\cos \frac {tp}2\right)
\\&+ (1+r^2-2r\cos t)\frac{r^{\frac p2}\mathrm {ctg} \frac \pi{2p} \sin\frac {pt}2 }{2r \sin t}
+r^{\frac p2}\mathrm {ctg} \frac \pi{2p} \cos \frac {tp} 2\ge 0
\end{split}\end{equation*}
if and only if
\begin{equation*}
-\frac 12 \big (1 + r^{-s}\big) \big(r-\cos t \big)\sin \frac {tp}2 - \frac 12 \big(1+r^{-s}\big) \cos \frac {tp}2
\sin t +   \big(1+r^2-2r \cos t\big)\frac  r2 \sin \frac {tp}2   +\cos\frac { tp}2 \sin t \ge 0.
\end{equation*}
But this can be expressed as
\begin{equation*}
\left(\frac{1+r^2}{2r}\sin \frac {tp}2 -\frac r2 (1+r^{-s})\right) + \frac 1 2 \big(r^{-s}-1\big)
 \big(\cos t \sin \frac {tp}2 - \sin t\cos\frac  {tp}2\big) \ge 0
\end{equation*}
i.e.,
\begin{equation*}
\frac 12 \bigg(\sin \frac {tp} 2 \big(r^{-1}  - r^{1-s}\big) +  \big(r^{-s}-1\big)\sin\big( \big(\frac p 2-1)t \big) \bigg) \ge 0
\end{equation*}

From  $\frac{\partial \Phi}{\partial t} (r,t) = 0$ we see that the condition $\sin \frac {pt}2\geq 0$ is satisfied, therefore
we can rewrite  this as
\begin{equation*}
\sin \big(\frac p2-1\big)t +  \frac {r^s-r^2}{r-r^{s+1}} \sin\frac  {tp}2\ge 0
\end{equation*}
The Lemma at the end of this section gives us $\frac {r^s-r^2}{r-r^{s+1}}\ge - \frac {s-2}{s}$.
Hence it is enough to prove
\begin{equation*}
\sin \big(\frac p2-1\big)t -\frac {s-2}{s} \sin \frac {tp}2\ge 0, \quad  0\le t\le \frac {2\pi}p.
\end{equation*}
We can write $\frac {2\pi y }p$ with $0\le y\le 1$ instead of  $t$, as in the proof in the Lemma 1.
Our estimate now takes the form
\begin{equation*}\begin{split}
\sin (\frac p2-1)\frac {2 \pi y}{p} - \frac {s-2}{s} \sin \pi y  & = \sin (\pi y- \frac 2p\pi y  ) - \frac {s-2}{s}\sin \pi y
\\&\ge (1-\frac 2 p)\sin \pi y -\frac {s-2}{s} \sin \pi y
\\& = (\frac 2s-\frac 2p) \sin \pi y \ge 0,
\end{split}\end{equation*}
where the first inequality follows from the concavity of $\sin-$function on the interval $(0,\pi).$ Hence, our method provide a way to prove the main result even for $s=p$! We will see that our main constraint is the inequality  for $t=0.$

Now, we can treat the boundary points. From the next sequence of the estimates
\begin{equation*}\begin{split}
\frac 2p \frac{\partial \Phi}{\partial t} & = \frac {2r\sin t} {2^ p \sin^p \frac {\pi}{2p} }  (1+r^2-2r\cos t) ^{\frac p2-1}
- r^{\frac p2} \mathrm {ctg} \frac \pi{2p} \sin \frac {tp}2
\\& =r^{\frac p2} \left(\frac {2\sin t} {2^ p \sin^p \frac {\pi}{2p} }  \bigg(\frac{1+r^2}r -2\cos t\bigg)^{\frac p2-1} -  \mathrm {ctg}\frac  \pi{2p} \sin \frac {tp}2  \right)
\\&\ge r^{\frac p2} \left( \frac {2\sin t} {2^ p \sin^p \frac {\pi}{2p} } ( 2- 2\cos t)^{\frac p2-1}-  \mathrm {ctg}\frac  \pi{2p} \sin \frac {tp}2  \right)
\\& \ge r^{\frac p2}\left(\frac { (1-\cos\frac \pi p)^{\frac p2-1}\sin\frac \pi p}{2^{\frac p2}
	\sin^p \frac{\pi}{2p}} - \mathrm {ctg} \frac \pi{2p}   \right)= 0,
\end{split}\end{equation*}
for $t\in [\frac \pi p, \frac {2\pi} p]$, we conclude that   $\frac{\partial \Phi}{\partial t}\ge 0$ for $t\in [\frac \pi p, \frac {2\pi}p]$.
Therefore  $\Phi (r,\frac {2\pi}p)\ge \Phi (r,\frac \pi  p) \ge 0$  (the positivity for $t=\frac{\pi}{p}$ is a consequence from the results of the second section, where  we
established the lower estimates).

For $r=1$ we refer to the  Kalaj's paper (\cite{KALAJ.TAMS}).

For $r= 0 $ we have that the inequality is equivalent with
$ \frac{1}{2}\le \frac 1{(2\sin \frac  \pi {2p})^p},$ which holds by the Claim 5.1 in the fifth section.

All our considerations till this moment holds for $s=p$, so we have just to prove the
inequality for $t=0$ in this case. Its proof is technically involved, so we postpone it for the next section. 

\textbf{Case}  $2\leq p\leq 4$
Again, using homogeneity we reduce the inequality to the case $z=1$ and $w=re^{it}$. Rewriting our inequality in the convenient form (as in the previous case) and making a substitution we arrive at:
\begin{equation*}
\Phi (r,t)=   - \left( \frac{1 + r^s}2\right)^{\frac ps} +  \frac{(1+r^2-2r \cos t)^{\frac p2}}{2^p \sin^p\frac \pi{2p}} +  r^{\frac p2}
\mathrm {ctg} \frac{\pi}{2p}\cos\frac{tp}{2}\geq 0,
\end{equation*}
now with $2\leq p\leq 4$ with $t \in [0,\pi]$ and $r \in [0,1].$ We see that it has the same form as in the previous case. It can be easily reduced to the case $t \in [0,\frac{\pi}{p}], $ by analyzing its partial derivative on $t$ line by the line as for $p\leq 4.$ For the rest of proof we see that all the arguments can be applied again, so it depends on the inequality for $t=0,$ which we give in the next section for $p\geq 2.$
\end{proof}

\begin{proof}[Proof of the Second Part of Theorem \ref{TH.1}]
From the Kalaj's paper(\cite{KALAJ.TAMS})  we know that $\phi_p(z)  = |z|^{\frac p2}v_p(t-\frac \pi2)$ is subharmonic on
$\mathbf{C}$ (see Lemma 4.5 there). Hence, integrating the inequality from the Lemma 4.2 for $z=P_+f(\zeta)$ and $w=P_-f(\zeta),$ we get
\begin{equation*}\begin{split}
\int_{\mathbf{T}}\left(|P_+f(\zeta)|^s+|P_-f(\zeta)|^s\right)^{\frac ps}|d\zeta|&\le \frac{2^{\frac {p}{s}}}{2^p\sin^p\frac {\pi}{2p}}\int_{\mathbf{T}}|f(\zeta)|^p|d\zeta|
\\&-2^{\frac{p}{s}}\mathrm {ctg} \frac{\pi}{2p}\int_{\mathbf{T}}\phi_p (P_+f(\zeta)P_-f(\zeta))|d\zeta|.
\end{split}\end{equation*}
and
$$\int_{\mathbf{T}}\phi_p(P_+f(\zeta)P_-f(\zeta))|d\zeta| \geq 0,$$
thus finally getting:
\begin{equation*}\begin{split}
\int_{\mathbf{T}}\left(|P_+f(\zeta)|^s+|P_-f(\zeta)|^s\right)^{\frac ps}|d\zeta|&\le \frac{2^{\frac {p}{s}}}{2^p\sin^p\frac {\pi}{2p}}\int_{\mathbf{T}}|f(\zeta)|^p|d\zeta|.
\end{split}\end{equation*}
\end{proof}

Here we will formulate and prove the technical lemma which we have used earlier in the proof of the Theorem 1.

\begin{lemma}
	The function $g(r) = \frac {r^s-r^2}{r-r^{s+1}}$ is monotone decreasing in $r$ for $s\ge 2$
\end{lemma}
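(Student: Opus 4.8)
The plan is to show directly that $g'(r) \le 0$ for $r \in (0,1)$, reducing the statement in two stages to elementary ``boundary value zero plus one-signed derivative'' arguments. First I would write $g = N/D$ with $N(r) = r^s - r^2$ and $D(r) = r - r^{s+1}$, and compute the numerator of the quotient derivative. A direct expansion of $N'(r)D(r) - N(r)D'(r)$ collapses, after collecting the powers $r^{2s}$, $r^{s+2}$, $r^s$ and $r^2$, to
\[
P(r) := N'(r)D(r) - N(r)D'(r) = r^{2s} - r^2 + (s-1)r^s(1 - r^2).
\]
Since $D(r)^2 = r^2(1-r^s)^2 > 0$ on $(0,1)$, the sign of $g'$ equals the sign of $P$, so it suffices to prove $P(r) \le 0$ there. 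As a sanity check, $P \equiv 0$ when $s = 2$, matching the fact that $g \equiv 0$ in that case.

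Next I would divide out the positive factor $r^2$ and study
\[
Q(r) := \frac{P(r)}{r^2} = r^{2s-2} - 1 + (s-1)r^{s-2}(1 - r^2).
\]
The crucial observation is that $Q(1) = 0$; hence $P(r) \le 0$ on $(0,1)$ will follow once I show $Q$ is nondecreasing there, which forces $Q(r) \le Q(1) = 0$. Differentiating and factoring out the nonnegative quantity $(s-1)r^{s-3}$ yields $Q'(r) = (s-1)r^{s-3}S(r)$ with
\[
S(r) = 2r^s - s r^2 + (s-2),
\]
so the whole matter reduces to checking $S(r) \ge 0$ on $(0,1)$.

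Finally, the same device closes the argument one level down: $S(1) = 0$ and $S'(r) = 2sr\,(r^{s-2} - 1) \le 0$ for $r \in (0,1)$, which is exactly where the hypothesis $s \ge 2$ is used, since it guarantees $r^{s-2} \le 1$. Thus $S$ is nonincreasing and $S(r) \ge S(1) = 0$. Tracing the implications back gives $Q' \ge 0$, hence $Q \le 0$, hence $P \le 0$, hence $g' \le 0$, which is the assertion. I do not expect a genuine obstacle: the only delicate point is the bookkeeping in the first expansion of $N'D - ND'$, where the cancellation must be carried out carefully to reach the factored form of $P$; once that display is in hand, the rest is a routine double use of the principle that a function vanishing at $r=1$ with one-signed derivative keeps its sign on $(0,1)$.
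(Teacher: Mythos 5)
Your proof is correct, and after the common first step it takes a genuinely different route from the paper's. Both arguments begin identically: the numerator of $g'$ collapses to $P(r)=r^{2s}-r^2+(s-1)r^s(1-r^2)$ (the paper writes this as $(1-s)(r^{s+2}-r^s)+r^{2s}-r^2$), and the problem reduces to showing $P\le 0$ on $(0,1)$. From there the paper divides by $r^{s+1}$ to restate the claim as $\frac{r^{1-s}-r^{s-1}}{s-1}-(r^{-1}-r)\ge 0$, recognizes the left-hand side as the integral $\int_r^{1/r}(t^{s-2}-1)\,dt$, and proves its nonnegativity by the substitution $t\mapsto 1/t$: symmetrizing gives $\tfrac12\int_r^{1/r}(t^{-2}-t^{-s})(t^s-1)\,dt$, whose integrand is pointwise nonnegative because both factors change sign together at $t=1$. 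You instead divide by $r^2$ and run a nested monotonicity argument: $Q(1)=0$ and $Q'=(s-1)r^{s-3}S$ with $S(r)=2r^s-sr^2+(s-2)$, then $S(1)=0$ and $S'(r)=2sr\,(r^{s-2}-1)\le 0$, so $S\ge 0$, hence $Q'\ge 0$, hence $Q\le Q(1)=0$. Both routes are elementary and complete; the paper's symmetrization is shorter once one sees the trick and makes the symmetry $r\leftrightarrow 1/r$ of the inequality transparent, while yours is more mechanical --- two applications of the principle that a function vanishing at $r=1$ with one-signed derivative keeps its sign --- and requires no clever substitution, only careful bookkeeping, which you have carried out correctly. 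Either proof yields the consequence actually used in the paper, namely $g(r)\ge\lim_{\rho\to 1^-}g(\rho)=-\frac{s-2}{s}$ on $(0,1)$.
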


\begin{proof}
	We find the derivative
	\begin{equation*}\begin{split}
	g'(r)& = \frac { (sr^{s-1} -2r)(r-r^{s+1})- (1- (s+1)r^s) (r^s-r^2) }{ (r-r^{s+1})^2}
	\\&= \frac {(1-s) (r^{2+s} -r^s)+r^{2s}-r^2}{(r-r^{s+1})^2 }
	\end{split}\end{equation*}
	Hence $g'(r)\le  0$ is equivalent with $r^{1-s} - r^{s-1} \ge (s-1)(r^{-1}-r),$  which is equivalent with
	\begin{equation*}
	\frac {r^{1-s} - r^{s-1}}{s-1} - (r^{-1}-r) = \int_r^{r^{-1}} (t^{s-2}-1)dt
	\end{equation*}
	which is $\ge 0$  since
	\begin{equation*}
	\int_r^{r^{-1}} (t^{s-2}-1)dt  = \int_r^{r^{-1}}\frac { (y^{2-s}-1)}{y^2} dy = \int_r^{r^{-1}} (y^{-s}-y^{-2})dy,
	\end{equation*}
	hence
	\begin{equation*}
	\int_r^{r^{-1}} (t^{s-2}-1)dt = \frac 12\int_r^{r^{-1}} (t^{s-2} -1 +t^{-s} - t^{-2})dt  =\frac 12
	\int_r^{r^{-1}}  (t^{-2}  - t^{-s}) (t^s-1)dt \ge 0,
	\end{equation*}
	since $t^{-2}- t^{-s}$ and $t^{s}-1$ are both $\ge 0$ or both  $\le 0$ for $t\ge 1$ and $t\le 1$, respectively.
\end{proof}

\section{Proof of the inequality from Lemma 4.2 for $t=0$}

In this section we will provide a proof of the inequality

\begin{equation}
\label{eq*}
\Phi(r)=-\frac{1+r^p}{2}+\frac{(1-r)^p}{2^p \sin^p\frac{\pi}{2p}}+r^{\frac{p}{2}}\mathrm {ctg}\frac{\pi}{2p}\geq 0.
\end{equation}

For          $r\le 1-2\sin\frac  \pi{2p}$ we have   $(1-r)^p\ge (2\sin\frac \pi {2p} )^p$ and then
$\frac{(1-r) ^p} {2^p\sin^p \pi/2p} \ge 1\ge \frac{1+r^p}{2}$, and our inequality easy follows.
For $ 1-2\sin \frac {\pi}{2p} \le r \le 1$ we will prove that  $r^{\frac p2} \mathrm {ctg}\frac\pi{2p} \ge
\frac {1+r^p}2$ or  $\mathrm {ctg} \frac\pi {2p} \ge\frac {r^{\frac p2} +r^{-\frac p2}}{2}$.   The function
$g(r) = \frac{1}{2} (r^ {\frac p2} + r^{-\frac p2})$  decreases in $r$ and hence its maximum on
$[ 1-2\sin \frac \pi{2p},1]$  may be estimated as
\begin{equation*}
\frac {r^{\frac p 2} + r^{-\frac p2}}{2}
\le  \frac{ (1-2\sin \frac \pi{2p})^{-\frac p2}  +(1-2\sin \frac\pi{2p})^{\frac p2} }{2}
\le \frac{(1- \frac \pi p)^{-\frac p2} +(1-\frac \pi p) ^{\frac p2} }{2},
\end{equation*}
since  $1-2\sin \frac \pi {2p} \ge 1- \frac \pi p $.     So it is enough to verify that
\begin{equation*}
\mathrm {ctg}\frac \pi {2p} \ge\frac{(1-\frac \pi p)^{-\frac p2} +(1-\frac \pi  p) ^{\frac p2} }{2}.
\end{equation*}

Since $\mathrm {ctg} \frac \pi{ 2p}$ is increasing  in $p$ while
$\varphi (p ) = \frac{(1-\frac \pi p)^{-\frac p 2} +(1-\frac \pi p) ^{\frac p2} }{2}$ is
decreasing in $p$ since
$\varphi'(p) = \frac 14 (\log (1-\frac \pi p) + \frac \pi{p-\pi}) )
( e^{\frac p2 \log(1-\frac \pi p)} -  e^{-\frac p2\log(1-\frac \pi p)} )  \le 0$. Here
$\log (1-\frac \pi p) + \frac \pi{p-\pi} $ is equal to  $f(\frac\pi p)$, where
$f(x) = \log (1-x) - \frac x{x-1}$ for which we have  $f'(x) = \frac{x}{(1-x)^2}\ge 0$ and
$f(0)=0$ which gives  $f(x)>0$  fro $x>0$ and consequently $f(\frac \pi p)>0$.  Therefore,
$\mathrm {ctg} \frac\pi { 2p}  -  \frac{(1-\frac \pi p)^{-\frac p 2} +(1-\frac \pi p) ^{\frac p2}}{2}$
is increasing  and its value at $p= 7$ is positive (and is equal $\approx 0,29$), hence we get the result for $ p  \ge 7$.

For $6\leq p\leq 7,$ we use similar argument. In case $r \geq 1-\frac{20}{7p},$ the inequality follows from $\mathrm {ctg}\frac{\pi}{2p}\geq \frac{r^{\frac{p}{2}}+r^{-\frac{p}{2}}}{2}.$Indeed, the right-hand side is decreasing on $r \in [0,1]$ and it is enough to prove
$$\mathrm {ctg}\frac{\pi}{2p}\geq \frac{(1-\frac{20}{7p})^{\frac{p}{2}}+(1-\frac{20}{7p})^{-\frac{p}{2}}}{2}.$$
Imitating the proof for the range $p\geq 7$ we see that the difference between left and right sides is increasing function and its value at 6 is $\mathrm {ctg}\frac{\pi}{12}-\frac{1}{2}\big((\frac{11}{21})^3+(\frac{21}{11})^3\big)\geq 0, $ and our inequality follows. In case $r\leq 1-\frac{20}{7p},$we have:

$$\frac{(1-r)^p}{2^p\sin^p\frac{\pi}{2p}}\geq \bigg(\frac{10}{7p\sin\frac{\pi}{2p}}\bigg)^p\geq \frac{1}{2},$$ which is implied by the following sequence of inequalities:

$$ \bigg(\frac{7p\sin\frac{\pi}{2p}}{10}\bigg)^p \leq \bigg(\frac{7\pi}{20}\bigg)^p\leq \big(\frac{11}{10}\big)^p\leq \big(\frac{11}{10}\big)^7<2.$$

Together with $r^{\frac{p}{2}}\mathrm {ctg}\frac{\pi}{2p}\geq \frac{1}{2}r^p,$ we get $\eqref{eq*}$.

An important step in analysis of the other cases is given in the following claim:

\begin{claim} 
	Function $h(p)=\big(2\sin\frac{\pi}{2p}\big)^{-p}$ is increasing for $p\geq 2.$
\end{claim}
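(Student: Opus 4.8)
The plan is to pass to logarithms: since $h(p)>0$, it is increasing on $[2,\infty)$ precisely when $\frac{d}{dp}\log h(p)>0$ there. Writing $\log h(p) = -p\log\bigl(2\sin\tfrac{\pi}{2p}\bigr)$ and introducing the substitution $\theta=\tfrac{\pi}{2p}$ (so that $p\ge 2$ corresponds to $\theta\in(0,\tfrac\pi4]$ and $\tfrac{\pi}{2p}=\theta$), a direct differentiation in $p$ gives
\begin{equation*}
\frac{d}{dp}\log h(p) = -\log\bigl(2\sin\theta\bigr) + \theta\cot\theta =: \psi(\theta),
\end{equation*}
where I have used $\tfrac{d\theta}{dp}=-\tfrac{\pi}{2p^2}$ and $\tfrac{\pi}{2p}=\theta$ to absorb the extra factors. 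Thus everything reduces to showing $\psi(\theta)>0$ on $(0,\tfrac\pi4]$.

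The key step is to differentiate $\psi$ in $\theta$, where a pleasant cancellation occurs. From $\frac{d}{d\theta}(\theta\cot\theta)=\cot\theta-\theta\csc^2\theta$ and $\frac{d}{d\theta}\log(2\sin\theta)=\cot\theta$, the two $\cot\theta$ terms cancel exactly, leaving
\begin{equation*}
\psi'(\theta) = -\theta\csc^2\theta < 0 \quad\text{for } \theta\in(0,\tfrac\pi4].
\end{equation*}
Hence $\psi$ is strictly decreasing on the interval, so its infimum is attained at the right endpoint $\theta=\tfrac\pi4$, i.e.\ at $p=2$.

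It then remains only to check a single numerical inequality at this endpoint: $\psi(\tfrac\pi4)=\tfrac\pi4\cot\tfrac\pi4-\log\bigl(2\sin\tfrac\pi4\bigr)=\tfrac\pi4-\tfrac12\log 2>0$, since $\tfrac\pi4\approx 0.785$ while $\tfrac12\log 2\approx 0.347$. Combining this with the monotonicity of $\psi$ yields $\psi(\theta)\ge\psi(\tfrac\pi4)>0$ throughout $(0,\tfrac\pi4]$, whence $\frac{d}{dp}\log h(p)>0$ and $h$ is increasing for $p\ge 2$.

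There is no serious obstacle in this argument: the only point requiring genuine verification rather than formal manipulation is the endpoint estimate $\tfrac\pi4>\tfrac12\log 2$, a clean explicit inequality. The engine of the proof is the cancellation producing $\psi'(\theta)=-\theta\csc^2\theta$, which collapses what looks like a delicate transcendental monotonicity question into an elementary single-variable one solved purely by its boundary value.
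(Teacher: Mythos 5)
Your proof is correct, and it takes a genuinely cleaner route than the paper's. The paper substitutes $x=2\sin\frac{\pi}{2p}$, inverts to $p=\frac{\pi}{2\arcsin(x/2)}$, and reduces the claim to showing that $f(x)=\frac{\pi}{2\arcsin(x/2)}\log x$ is increasing; the sign of $f'$ then rests on the inequality $g(y)=\sqrt{1-y^2}\arcsin y-y\log(2y)\ge 0$ for $0<y\le\frac{\sqrt{2}}{2}$, which the paper proves by a second-derivative computation ($g''\le 0$, so $g$ is concave) together with two endpoint evaluations, $\lim_{y\to 0^+}g(y)=0$ and $g\bigl(\tfrac{\sqrt{2}}{2}\bigr)=\tfrac{1}{4\sqrt{2}}(\pi-\log 4)>0$. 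You instead differentiate $\log h$ directly in $p$, keeping the angle $\theta=\frac{\pi}{2p}$ as the variable, and the resulting function $\psi(\theta)=\theta\cot\theta-\log(2\sin\theta)$ admits the exact cancellation $\psi'(\theta)=-\theta\csc^2\theta<0$; monotonicity plus the single numerical check $\frac{\pi}{4}>\frac{1}{2}\log 2$ finishes the argument. The two reductions are in fact equivalent: under $y=\sin\theta$ one has $g(\sin\theta)=\sin\theta\,\psi(\theta)$, and your endpoint inequality is precisely the paper's $g\bigl(\tfrac{\sqrt{2}}{2}\bigr)>0$ up to the positive factor $\tfrac{1}{\sqrt{2}}$. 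But your parametrization trades the paper's machinery — derivatives of $\arcsin$, a concavity argument, and a limit at the left endpoint — for a first-derivative monotonicity in which nothing remains to estimate except one explicit number. Your argument is shorter and strictly more elementary while establishing the same (strict) monotonicity.
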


\begin{proof} 
	After the substitution $x=2\sin\frac{\pi}{2p}\leq \sqrt{2},$ we have $p=\frac{\pi}{2\arcsin\frac{x}{2}}$ and the claim reduces to prove that $f(x)=\frac{\pi}{2\arcsin\frac{x}{2}}\log x$ is increasing. Its derivative is $f'(x)=\frac{\pi}{2x\sqrt{1-\frac{x^2}{4}}\arcsin^2{\frac{x}{2}}}\Bigl(\sqrt{1-\frac{x^2}{4}}\arcsin{\frac{x}{2}}-\frac{x}{2}\log x\Bigr),$ so we have to prove the inequality $\sqrt{1-y^2}\arcsin y-y\log(2y)\geq 0, $ for $0\leq y\leq \frac{\sqrt{2}}{2}.$ But, we easily find $g''(y)=-\frac{1}{y(1-y^2)}-\frac{\arcsin y}{(1-y^2)^{\frac{3}{2}}}\leq 0,$ so $g$ is concave and attains it minimum at the end of the interval. From $\lim_{y\rightarrow 0+}g(y)=0$ and $g(\frac{\sqrt{2}}{2})=\frac{1}{4\sqrt{2}}(\pi-\log(4))> 0$ we get the conclusion.
	
	For $5\leq p \leq 6,$ we have:
	
	$$\Phi(r)\geq -\frac{1+r^5}{2}+\frac{(1-r)^6}{(2\sin\frac{\pi}{10})^5}+r^3\mathrm {ctg}\frac{\pi}{10}\geq -\frac{1+r^5}{2}+11(1-r)^6+3r^3=:\frac{1}{2}g_1(r),$$
	where the first inequality follows from the claim and monotonicity of $r^p $ and cotangent function, while the second follows by estimating the values of included functions. We have $g_1'(r)=-5r^4+18r^2-132(1-r)^5$ and $g_1''(r)=4r(9-5r)+660(1-r)^4\geq 0,$ so $g_1$
	is convex. Since $g_1'(0)=-132$ and $g_1'(1)=13,$ it has unique minimum inside the interval $(0,1)$. From $g_1'(\frac{1}{2})>0$ and $g_1'(\frac{49}{100})<0,$ we conclude that the minimum of $g_1$ is in the interval $(\frac{49}{100}, \frac{1}{2})$, therefore we can estimate:
	
	$$2\Phi(r)\geq 6\bigg(\frac{49}{100}\bigg)^3+22\bigg(\frac{1}{2}\bigg)^6-1-\bigg(\frac{1}{2}\bigg)^5=\frac{9197}{500000}>0$$
	
	for $r \in (\frac{49}{100},\frac{1}{2}).$ But this is the interval where $g_1$ has its minimum, so the same inequality holds for $r \in [0,1].$
	
	For $\frac{9}{2}\leq p\leq 5,$ using the same method we get:
	
	$$ \Phi(r)\geq -\frac{1+r^{\frac{9}{2}}}{2}+\frac{(1-r)^5}{(2\sin\frac{\pi}{9})^{\frac{9}{2}}}+r^{\frac{5}{2}}\mathrm {ctg}\frac{\pi}{9}\geq -\frac{1+r^{\frac{9}{2}}}{2}+\frac{11}{2}\big(1-r\big)^5+\frac{5}{2}r^{\frac{5}{2}}=:\frac{1}{2}g_2(r). $$
	
	Here we find $g_2'(r)=-\frac{9}{2}r^{\frac{7}{2}}-55(1-r)^4+\frac{25}{2}r^{\frac{3}{2}}$ and $g_2''(r)=\frac{1}{4}r^{\frac{1}{2}}(75-63r^2)+220(1-r)^3 \geq 0,$$ g_2$ is convex and from $g_2'(\frac{12}{25})<0$ and $g_2'(\frac{49}{100})>0;$ this implies that the minimum is in $(\frac{12}{25},\frac{49}{100})$ and we have:
	
	$$g_2(r)\geq -1-\bigg(\frac{49}{100}\bigg)^{\frac{9}{2}}+11\bigg(\frac{51}{100}\bigg)^5+5\bigg(\frac{12}{25}\bigg)^{\frac{5}{2}} >0$$
	and consequently $\Phi(r)>0.$
	
	For $4\leq p\leq \frac{9}{2} ,$ we get:
	
	$$ \Phi(r)\geq -\frac{1+r^4}{2}+\frac{(1-r)^{\frac{9}{2}}}{(2\sin\frac{\pi}{8})^4}+r^{\frac{9}{4}}\mathrm {ctg}\frac{\pi}{8}\geq -\frac{1+r^4}{2}+\frac{29}{10}\big(1-r\big)^5+\frac{12}{5}r^{\frac{9}{4}}=:\frac{1}{10}g_3(r). $$
	
	Now, $g_3'(r)=-20r^3-\frac{261}{2}(1-r)^{\frac{7}{2}}+54r^{\frac{5}{4}}$ and $g_3''(r)=\frac{1}{2}r^{\frac{1}{4}}(135-120r^{\frac{3}{4}})+\frac{1827}{4}(1-r)^{\frac{5}{2}} > 0,$$ g_3$ is convex and from $g_2'(\frac{43}{100})<0$ and $g_2'(\frac{11}{25})>0;$ which gives us that the minimum is in $(\frac{43}{100},\frac{1}{25})$ and:
	
	$$g_3(r)\geq -5-5\bigg(\frac{11}{25}\bigg)^4+29\bigg(\frac{14}{25}\bigg)^{\frac{9}{2}}+24\bigg(\frac{43}{100}\bigg)^{\frac{9}{4}} >0$$
	and  $\Phi(r)>0.$
	
	For $\frac{7}{2}\leq p\leq 4,$ we get:
	
	$$ \Phi(r)\geq -\frac{1+r^{\frac{7}{2}}}{2}+\frac{(1-r)^4}{(2\sin\frac{\pi}{7})^{\frac{7}{2}}}+r^2\mathrm {ctg}\frac{\pi}{7}\geq -\frac{1+r^{\frac{7}{2}}}{2}+\frac{8}{5}\big(1-r\big)^4+2r^2=:\frac{1}{10}g_4(r). $$
	
	Here, we have $g_4'(r)=-\frac{35}{2}r^{\frac{7}{2}}+40r-64(1-r)^3,$ $g_4''(r)=-\frac{175}{4}r^{\frac{3}{2}}+40+192(1-r)^2,$ and $g_4^{(3)}(r)=384r-\frac{525}{8}\sqrt{r}-384\leq 0$ on $(0,1)$, thus $ g_4'$ is concave and, since $g_4'(0)<0, g_4'(1)>0, g_4''(0)>0,g_4''(1)<0$ we conclude that $g_4'$
	increases till some $r_0$ and then decreases, having exactly one zero in $(0,1).$ Also, $g_4'(\frac{79}{200})<0, g_4'(\frac{99}{250})>0,$ therefore:
	$$g_4(r)\geq -5-5\bigg(\frac{99}{250}\bigg)^{\frac{7}{2}}+20\bigg(\frac{79}{200}\bigg)^2+16\bigg(\frac{151}{250}\bigg)^4 >0,$$
	i.e.  $\Phi(r)>0.$
	
	For $\frac{13}{4}\leq p\leq \frac{7}{2},$ we get:
	
	$$ \Phi(r)\geq -\frac{1+r^{\frac{13}{4}}}{2}+\frac{(1-r)^{\frac{7}{2}}}{(2\sin\frac{\pi}{6})^{3}}+r^{\frac{7}{4}}\mathrm {ctg}\frac{2\pi}{13}\geq -\frac{1+r^{\frac{13}{4}}}{2}+\big(1-r\big)^{\frac{7}{2}}+\frac{19}{10}r^{\frac{7}{4}}=:\frac{1}{10}g_5(r). $$
	
	We easily find  $g_5'(r)=-\frac{65}{4}r^{\frac{9}{4}}+\frac{133}{4}r^{\frac{3}{4}}-35(1-r)^{\frac{5}{2}},$ $g_5''(r)=\frac{1}{16r^{\frac{1}{4}}}(399-585r^{\frac{3}{2}})+\frac{175}{2}(1-r)^{\frac{3}{2}},$ thus for $r\leq r_0=(\frac{133}{195})^{\frac{2}{3}}$ the function $g_5$ is convex. For $r \geq r_0,$ we have $g_5'(r)=\frac{r^{\frac{3}{4}}}{4}(133-65r^{\frac{3}{2}})-35(1-r)^{\frac{5}{2}}\geq \frac{68r_0^{\frac{3}{4}}}{4}-35(\frac{23}{100})^{\frac{5}{2}}=17\sqrt{\frac{133}{195}}-35(\frac{23}{100})^{\frac{5}{2}}>0,$ hence, in this case, $g_5$ is increasing (Here we use that $r_0>\frac{77}{100}$). We can now conclude that the minimum of $g_5$ on $(0,1)$ is achieved in the interval $(0, r_0),$ so, from $g_5'(\frac{163}{500})g_5'(\frac{327}{1000})<0,$ we can estimate $g_5$ as:
	
	$$g_5(r)\geq -5-5\bigg(\frac{327}{1000}\bigg)^{\frac{13}{4}}+10\bigg(\frac{673}{1000}\bigg)^{\frac{7}{2}}+19\bigg(\frac{163}{500}\bigg)^{\frac{7}{4}} >0,$$
	i.e.  $\Phi(r)>0.$
	
	For $3 \leq p\leq \frac{13}{4},$ we get:
	
	$$ \Phi(r)\geq -\frac{1+r^3}{2}+\frac{(1-r)^{\frac{13}{4}}}+r^{\frac{7}{4}}\mathrm {ctg}\frac{\pi}{3}\geq -\frac{1+r^3}{2}+\big(1-r\big)^{\frac{13}{4}}+\frac{17}{10}r^{\frac{13}{8}}=:\frac{1}{10}g_6(r). $$
	
	We easily find  $g_6'(r)=-15r^2-\frac{65}{2}(1-r)^{\frac{9}{4}}+\frac{221}{8}r^{\frac{5}{8}},$ $g_6''(r)=r^{\frac{3}{8}}(\frac{1105}{64}-30r^{\frac{5}{8}})+\frac{585}{8}(1-r)^{\frac{5}{4}},$  hence, for $r^{\frac{5}{8}}<\frac{221}{384},$ which holds for $r<\frac{2}{5}$ this function is convex. For $r\geq \frac{2}{5},$ we have $g_6'(r)=r^{\frac{5}{8}}(\frac{221}{8}-15r^{\frac{3}{8}})-\frac{65}{2}(1-r)^{\frac{13}{4}}\geq (\frac{2}{5})^{\frac{5}{8}}\cdot\frac{101}{8}-\frac{65}{2}(\frac{3}{5})^{\frac{13}{4}}>0,$ and $g_6$ is increasing on $[\frac{2}{5},1).$ Again, we conclude that it has unique point of minimum in $(0,1)$ and, from $g_6'(\frac{347}{1000})g_6'(\frac{87}{250})<0,$ therefore:
	
	$$g_6(r)\geq -5-5\bigg(\frac{87}{250}\bigg)^3+10\bigg(\frac{163}{250}\bigg)^{\frac{13}{4}}+17\bigg(\frac{347}{1000}\bigg)^{\frac{13}{8}} >0.$$
	
	For $ \frac{29}{10} \leq p\leq 3,$ we get:
	
	$$ \Phi(r)\geq -\frac{1+r^{\frac{29}{10}}}{2}+\frac{(1-r)^3}{(2\sin{\frac{5\pi}{29}})^{\frac{29}{10}}}+ r^{\frac{3}{2}}\mathrm {ctg}{\frac{5\pi}{29}}\geq -\frac{1+r^{\frac{29}{10}}}{2}+\frac{9}{10}\big(1-r\big)^3+\frac{8}{5}r^{\frac{3}{2}}=:\frac{1}{10}g_7(r). $$
	
	Since $g_7^{(3)}(r)=-54-6r^{-\frac{3}{2}}-\frac{2259}{200}r^{-\frac{1}{10}}<0,$  $g_7'(r)=-\frac{29}{2}r^{\frac{19}{10}}-27(1-r)^2+24r^{\frac{1}{2}}$ is concave and $g_7'(0)<0, g_7'(1)>0$ $g_9$ has exactly one zero. Again, the estimate for $g_7$ comes from $g_7'(\frac{33}{100})g_7'(\frac{1}{3})<0$:
	
	$$g_7(r)\geq -5-5\bigg(\frac{1}{3}\bigg)^{\frac{29}{10}} +9\bigg(\frac{2}{3}\bigg)^3+16\bigg(\frac{33}{100}\bigg)^{\frac{3}{2}}>0.                     $$
	
	For $\frac{27}{10}\leq p \leq \frac{29}{10},$
	
	$$ \Phi(r)\geq -\frac{1+r^{\frac{27}{10}}}{2}+\frac{(1-r)^{\frac{29}{10}}}{(2\sin{\frac{5\pi}{27}})^{\frac{27}{10}}}+ r^{\frac{29}{20}}\mathrm {ctg}{\frac{5\pi}{27}}\geq -\frac{1+r^{\frac{27}{10}}}{2}+\frac{77}{100}\big(1-r\big)^{\frac{29}{10}}+\frac{3}{2}r^{\frac{29}{20}}=:\frac{g_8(r)}{100}. $$
	
	Considering the function $h(r)=3r^{\frac{29}{20}}-1-r^{\frac{27}{10}}$ we see that it is increasing, since $h'(r)=\frac{r^{\frac{9}{20}}}{20}(87-54r^{\frac{5}{4}})>0,$ so we can see that for $r\geq \frac{17}{20}$ we have $h(r)\geq h(\frac{17}{20})>0$ and consequently $g_8>0.$ On the interval $[0,\frac{17}{20}]$ $g_8$ is convex, which follows from $g_8''(r)=\frac{r^{\frac{1}{20}}}{8}(1635-1836r^{\frac{3}{4}})+\frac{42427}{10}(1-r)^{\frac{9}{10}}$ and positivity of the first summand on this interval. Thus, $g_8$ has unique point of minimum, which is estimated from $g_8'(\frac{31}{100})g_8'(\frac{311}{1000})<0$ as:
	
	$$g_8(r)\geq -50-50\bigg(\frac{311}{1000}\bigg)^{\frac{27}{10}} +77\bigg(\frac{689}{1000}\bigg)^{\frac{29}{10}}+150\bigg(\frac{31}{100}\bigg)^{\frac{29}{20}}>0. $$
	
	For $\frac{5}{2}\leq p \leq \frac{27}{10},$
	
	$$ \Phi(r)\geq -\frac{1+r^{\frac{5}{2}}}{2}+\frac{(1-r)^{\frac{27}{10}}}{(2\sin{\frac{\pi}{5}})^{\frac{5}{2}}}+ r^{\frac{27}{20}}\mathrm {ctg}{\frac{\pi}{5}}\geq -\frac{1+r^{\frac{5}{2}}}{2}+\frac{2}{3}\big(1-r\big)^{\frac{27}{10}}+\frac{172}{125}r^{\frac{27}{20}}=:\frac{g_9(r)}{750}. $$
	
	The function $g_9$ is convex on $[0,\frac{19}{25}],$ since $g_9''(r)=r^{-\frac{3}{20}}(\frac{59211}{50}-\frac{5625}{4}r^{\frac{13}{20}})+2295(1-r)^{\frac{7}{10}}$ and the expression inside the first brackets is positive on this interval. Thus, on this part of the interval $[0,1]$ we can use arguments as earlier. For other values of we will see that $g_9$ is positive, using the function $h(r)=\frac{344}{125}r^{\frac{27}{20}}-1-r^{\frac{5}{2}}$. It is increasing $(h'(r)=r^{\frac{7}{20}}(\frac{2322}{625}-\frac{5}{2}r^{\frac{23}{20}})),$hence $h(r)\geq h(\frac{19}{25})>0.$ Now, from $g_9'(\frac{347}{1000})g_9'(\frac{87}{250})<0, $ we see that we can estimate as:
	
	$$g_9(r)\geq -375-375\bigg(\frac{87}{250}\bigg)^{\frac{5}{2}} +500\bigg(\frac{163}{250}\bigg)^{\frac{27}{10}}+1032\bigg(\frac{347}{1000}\bigg)^{\frac{27}{20}}>0. $$

	Finally, in the case $2\leq p\leq \frac{5}{2},$ we use Jensen's inequality
	\begin{align*}
	\frac{(1-r)^p}{2^p \sin^p\frac{\pi}{2p}}+r^{\frac{p}{2}}\mathrm {ctg}\frac{\pi}{2p}&\geq \bigg(1+\mathrm {ctg}\frac{\pi}{2p}\bigg)^{1-\frac{p}{2}}\bigg(\frac{(1-r)^2}{4\sin^2\frac{\pi}{2p}}+r\mathrm {ctg}\frac{\pi}{2p}\bigg)^{\frac{p}{2}}\\
	&= \bigg(1+\mathrm {ctg}\frac{\pi}{2p}\bigg)^{1-\frac{p}{2}}\bigg(\mathrm {ctg}\frac{\pi}{2p}\bigg)^{\frac{p}{2}}\bigg(\frac{(1-r)^2}{2\sin\frac{\pi}{p}}+r\bigg)^{\frac{p}{2}}\\
	&\geq \bigg(1+\mathrm {ctg}\frac{\pi}{2p}\bigg)^{1-\frac{p}{2}}\bigg(\mathrm {ctg}\frac{\pi}{2p}\bigg)^{\frac{p}{2}}\bigg(\frac{1+r^2}{2}\bigg)^{\frac{p}{2}},
	\end{align*}
	thus, it is enough to prove that the last expression is $\geq \frac{1+r^p}{2},$ or equivalently:
	$$ 2^{1-\frac{p}{2}}\bigg(1+\mathrm {ctg}\frac{\pi}{2p}\bigg)^{1-\frac{p}{2}}\bigg(\mathrm {ctg}\frac{\pi}{2p}\bigg)^{\frac{p}{2}}\geq 1,$$
	having in mind that $(1+r^p)^{\frac{1}{p}}\leq (1+r^2)^{\frac{1}{2}}$ holds for $p\geq2.$
	Taking the logarithms in the last inequality and changing the variable as $\mathrm {ctg}\frac{\pi}{2p}=x \in [1,\mathrm {ctg}\frac{\pi}{5}],$ we have to prove:
	
	$$f(x):=4\log(2+2x)\arctan x+\pi\log(\frac{x}{2+2x}) \geq 0.$$
	
	Since $f(1)=0$ and $f(\mathrm {ctg}\frac{\pi}{5})>0,$ we get the conclusion after providing a proof that $f$ is concave. But $f''(x)=-\frac{4\arctan x}{(x+1)^2}+\frac{8x\log(2+2x)}{(x^2+1)^2}-\frac{8}{(x^2+1)(x+1)}-\frac{\pi(2x+1)}{x^2(x+1)^2}.$ We will prove  $\frac{8x\log(2+2x)}{(x^2+1)^2}-\frac{8}{(x^2+1)(x+1)}-\frac{\pi(2x+1)}{x^2(x+1)^2}\leq 0,$ which gives us the desired implication.
	From $\log(2+2x)< \log 5<\frac{5}{3},$ we see that $\frac{5}{3}\leq \frac{8x^2(x+1)(x^2+1)+3(2x+1)(x^2+1)^2}{8x^3(x+1)^2} \leq \frac{8x^2(x+1)(x^2+1)+\pi(2x+1)(x^2+1)^2}{8x^3(x+1)^2}$ will be enough. While the second inequality is trivial, the first one is equivalent with $P(x)=2x^5-47x^4+20x^3+42x^2+18x+9\geq 0.$ Since $40\big(\frac{7}{5}\big)^3+120\cdot\frac{7}{5}+84<564,$ we have that $P''(x)=40x^3-564x^2+120x+84<0,$ and hence, $P'(x)=10x^4-188x^3+60x^2+84x+18$ is decreasing on $(1,\frac{7}{5})$, thus giving $P'(x)\leq P'(1)<0,$ i. e. $P(x)$ is decreasing on the same interval and consequently $P(x)\geq P(\frac{7}{5})>0.$ (Here we use that $\mathrm {ctg}\frac{\pi}{5}<\frac{7}{5}.$)
\end{proof}

\section{Riesz's theorem on conjugate harmonic  functions for various function spaces}

The classical  Riesz's theorem on  conjugate harmonic functions  says that for every  $p\in (1,\infty)$ there exists  a constant  $c_p$ such that
\begin{equation*}
\|U+  i V\|_p\le c_p \|U\|_p,
\end{equation*}
where $U$ is a real-valued   function in $h^p$, $V$ is a harmonic  conjugate  to $U$ on $\mathbf{U}$,  normalized such that $V(0)=0$.    See,  for
instance,   \cite[Theorem 17.26]{RUDIN.BOOK}. Verbitsky proved \cite{VERBITSKY.ISSLED} that the best possible  constant in the Riesz inequality is
\begin{equation}\label{EQ.VC}
c_p =
\begin{cases}
\mathrm {sec} \frac \pi{2p}, & \mbox{if}\  1<p\le 2; \\
\mathrm {csc} \frac \pi{2p}, & \mbox{if}\  2\le p<\infty.
\end{cases}
\end{equation}

\subsection{}
Here we prove the following generalisation of the M. Riesz theorem on harmonic  conjugate functions. For example, the next  theorem incudes analytic
functions that belong to well known Fock spaces. Fock spaces are obtained  if  we take     the measure $\omega (z)= e^{ -\alpha |z|^2 }dA(z)$, where
$\alpha>0$ is a constant. We deduce  also  the M. Riesz theorem for Besov type spaces.

\begin{theorem}
Let $X$ be a set (for example, it may be a  space with a norm). Assume that $A: X\rightarrow A_p(\omega)$ is a mapping (an operator, not necessarily linear),  where $A_{p,\omega}$ is the space of all analytic functions in the domain $D$ which is the unit disc $\mathbf{U}$ or the complex plane
$\mathbf{C}$.   Let
\begin{equation*}
A_p(\omega) = \{ f : \| f \|_{p,\omega}\ \text{is finite} \},
\end{equation*}
where
\begin{equation*}
\|f\|_{p,\omega} = \left\{\int_D  |f(\zeta )|^p\omega(\zeta)d\zeta\right\}^{\frac 1p};
\end{equation*}
here  $\omega (\zeta)$ is radially symmetric weight  function, i.e., we have $\omega (\zeta)  =  \omega (|\zeta|)$.

Assume that the operator $\mathrm {Re} A$ is bounded; it  maps $X$ into $a_p(\omega)$ the space of real parts of functions in $A_p (\omega)$. Then
the operator $A$ is also  bounded. Moreover, there exists  a  universal  constant estimate.
\end{theorem}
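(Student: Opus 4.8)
The plan is to reduce the boundedness of $A$ to a single scalar inequality: for every analytic $f\in A_p(\omega)$ whose harmonic conjugate is normalized by $\mathrm{Im}\, f(0)=0$, I claim
\begin{equation*}
\|f\|_{p,\omega}\le c_p\,\|\mathrm{Re}\, f\|_{p,\omega},
\end{equation*}
where $c_p$ is the sharp Riesz constant \eqref{EQ.VC}. Granting this, if $\mathrm{Re}\, A$ is bounded, say $\|\mathrm{Re}\, A(x)\|_{p,\omega}\le M\|x\|_X$ when $X$ carries a norm, then applying the displayed inequality to $f=A(x)$ (which lies in $A_p(\omega)$ by hypothesis) gives $\|A(x)\|_{p,\omega}\le c_pM\|x\|_X$, so $A$ is bounded with the universal constant $c_p$. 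The one structural assumption is that $A(x)$ is normalized at the centre, $\mathrm{Im}\, A(x)(0)=0$; a nonzero value $a=\mathrm{Im}\, f(0)$ can be split off via $f=(f-ia)+ia$, at the cost of an additive term $|a|\,\|1\|_{p,\omega}$, which is finite for every radial weight occurring here, in particular for the Fock weight $\omega(z)=e^{-\alpha|z|^2}$ on $\mathbf{C}$.

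The core inequality I would prove by slicing into circles, and this is exactly where the radial symmetry $\omega(\zeta)=\omega(|\zeta|)$ enters. Writing the integral over $D$ (the disc $\mathbf{U}$, with $R=1$, or the plane $\mathbf{C}$, with $R=\infty$) in polar coordinates,
\begin{equation*}
\|f\|_{p,\omega}^p=\int_0^R\omega(\rho)\,\rho\left(\int_0^{2\pi}|f(\rho e^{i\theta})|^p\,d\theta\right)d\rho.
\end{equation*}
For each fixed $\rho<R$ I would pass to the dilate $g_\rho(\zeta)=f(\rho\zeta)$, which is analytic on $\overline{\mathbf{U}}$ (indeed on the strictly larger disc $|\zeta|<R/\rho$); its real part on $\mathbf{T}$ is $\theta\mapsto\mathrm{Re}\, f(\rho e^{i\theta})$ and, crucially, $\mathrm{Im}\, g_\rho(0)=\mathrm{Im}\, f(0)=0$ for \emph{every} $\rho$. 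Applying the sharp circle inequality, i.e.\ Verbitsky's form of the M.\ Riesz theorem \eqref{EQ.VC} (which itself descends from the Hollenbeck--Verbitsky and $P_\pm$ machinery used throughout the paper), to $g_\rho$ yields, with a constant independent of $\rho$,
\begin{equation*}
\int_0^{2\pi}|f(\rho e^{i\theta})|^p\,d\theta\le c_p^p\int_0^{2\pi}|\mathrm{Re}\, f(\rho e^{i\theta})|^p\,d\theta,
\end{equation*}
and multiplying by $\omega(\rho)\rho$ and integrating over $\rho\in(0,R)$ reproduces the core inequality.

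The step I expect to be delicate is the \emph{uniformity} across the family of slices: one must be sure the sharp constant $c_p$ is genuinely the same for every circle and that no radius-dependent remainder survives. This is cleanest in the subharmonic-minorant formulation already used for the $P_\pm$ estimates: the circle inequality comes from a pointwise bound $|f|^p\le c_p^p|\mathrm{Re}\, f|^p-S$ in which $S$ is subharmonic on $D$ with $S(0)\ge 0$, so the sub-mean-value property forces $\int_{|z|=\rho}S\ge 2\pi S(0)\ge 0$ uniformly in $\rho$, and discarding this nonnegative quantity only strengthens the inequality — precisely the mechanism behind $\int_{\mathbf{T}}\phi_p(P_+fP_-f)\ge 0$ earlier. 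Citing Verbitsky's circle theorem as a black box additionally sidesteps the multivaluedness of $f^p$ at zeros of $f$, since that theorem is phrased for a harmonic function and its conjugate. The residual points are routine: for $D=\mathbf{C}$ the outer integral runs to $\infty$ but converges because $f\in A_p(\omega)$, and the normalization propagates to every dilate since $g_\rho(0)=f(0)$, so under the normalized hypothesis the additive constant term never appears.
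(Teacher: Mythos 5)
Your proposal is correct and takes essentially the same approach as the paper: the paper integrates the pointwise minorant inequality $|z|^p\le c_p^p|\mathrm{Re}\,z|^p-b_p s(z)$ (with $s$ subharmonic) over $D$ in polar coordinates, using the radial symmetry of $\omega$, the sub-mean-value property of $s\circ f$, and the normalization at the origin to discard the correction term — precisely the mechanism you yourself invoke to justify the uniformity of the per-circle constant, so your dilation-plus-circle-theorem packaging is just a reordering of the same integration. One point in your favor: your normalization $\mathrm{Im}\,f(0)=0$ (with a sign flip to make $f(0)\ge 0$) is the right one, whereas the paper assumes only $\mathrm{Re}\,f(0)\ge 0$, which with $s(z)=\mathrm{Re}\,(z^p)$ does not by itself force $s(f(0))\ge 0$ (e.g.\ $f(0)=i$ gives $s(f(0))=\cos\frac{p\pi}{2}<0$ for $1<p<2$).
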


\begin{proof}
We have     the following inequality
\begin{equation*}
|z|^p\le c_p^p  |\mathrm {Re}  (z) |^p -  b_p s(z),
\end{equation*}
where $b_p$  is a positive constant   and  $s(z)$  is a subharmonic function,  and
\begin{equation}\label{EQ.VC}
c_p =
\begin{cases}
\mathrm {sec} \frac \pi{2p}, & \mbox{if}\  1<p\le 2; \\
\mathrm {csc} \frac \pi{2p}, & \mbox{if}\  2\le p<\infty.
\end{cases}
\end{equation}

For $x\in X$, let  $ Ax\in A_p(\omega)$ be a function denoted by $f (\zeta)$,   $\zeta\in D$. Applying the  inequality  above we obtain
\begin{equation*}
|f(\zeta)|^p\le c_p^p  |\mathrm {Re} f(\zeta)|^p - b_ps( f(\zeta))
\end{equation*}
Assume  that $\Re f(0)\ge0$ (otherwise we may  consider $-f(\zeta)$). If we multiply by $\omega(\zeta)$  and then integrate the above
inequality in $D$ we obtain
\begin{equation*}
\int _D |f(\zeta)|^p  \omega(\zeta )d\zeta \le c_p^p  \int_D |\mathrm {Re} f(\zeta)|^p \omega(\zeta )d\zeta
- b_p\int_D s( f(\zeta)) \omega(\zeta )d\zeta
\end{equation*}
Since $s(f(\zeta))$   is a subharmonic function on $D$, by  using polar coordinates and  the mean value theorem,  we obtain  (the symbol
$a$ stand   for $1$ or $\infty$)
\begin{equation*}\begin{split}
\int_D s (f(\zeta)) \omega(\zeta )d\zeta  & = \int_0^{a} r dr \int_{r \mathbf{T}} s( f(\zeta))
 \omega(|\zeta| )|d\zeta|  \ge s (f(0))  \cdot \int_0^a \omega(r) rdr\ge 0.
\end{split}\end{equation*}
It follows
\begin{equation*}
\int _D |f(\zeta)|^p  \omega(\zeta )d\zeta \le c_p^p  \int_D |\mathrm {Re}  f(\zeta)|^p \omega(\zeta )d\zeta.
\end{equation*}
Now, we  have   the inequality we need
\begin{equation*}
 \|f  \|_{p,  \omega} \le c_p  \|\mathrm {Re} f\|  _{p,  \omega},\quad \text{i.e.},\quad   \|Ax\|_{p,  \omega} \le c_p  \|\mathrm {Re} Ax\|
   _{p,  \omega}.
\end{equation*}

If   $X$ is a space with norm,  then according to the obtained inequality  we have
\begin{equation*}
\frac{ \|Ax\|_{p,  \omega}}{\|x\|_X} \le c_p  \frac{\|\mathrm {Re}  Ax\|  _{p,  \omega}}{\|x\|_X},\quad x\ne 0.
\end{equation*}
Therefore, if we take  $\sup_{x\ne 0}$ we obtain
$\|A:X\rightarrow   A_p (\omega)\|\le c_p   \|\mathrm {Re}  A :X\rightarrow   a_p (\omega) \|$, which we aimed to prove.
\end{proof}

\begin{remark}
Of course, one can recover the classical M. Riesz theorem for classical   Hp-spaces  from the above   theorem. If we can consider the Bergman spaces
$A_{p,\alpha}$,  $\alpha>-1$,  and   identity operator,  and if we   let $\alpha \rightarrow -1$,   we obtain M. Riesz theorem for  the Hardy space
$H^p$. Indeed, we have $\|\mathrm {Id} (f) \|_{A_{p,\alpha} } \le c_p  \|\mathrm {Re} (\mathrm {Id}) (f)\|_{h_{p,\alpha}}$. In other  words we have
$  \|  f  \|_{A_{p,\alpha} } \le c_p \|\mathrm {Re}   (f)\|_{a_{p,\alpha}} $ for every $f\in A_{p,\alpha}$.   If we let  $\alpha\rightarrow -1$, we
obtain the M. Riesz theorem  on conjugate harmonic  functions in  Hardy space  $H^p$.  Note that this is possible  because of the universality  of
the constant $c_p$, so  we  can take $\alpha\rightarrow -1$ and this constant  remains in the inequality.
\end{remark}

\subsection{}
Let
\begin{equation*}
B_{p,\omega} = \{f: \int_{D} |f'(\zeta)|^p\omega (\zeta) d\zeta<\infty\}
\end{equation*}
be the weighted  Besov type  space,  and let $b_{p,\omega}$  be the  harmonic   Besov space.

\begin{theorem}[M. Riesz theorem for weighted   Besov spaces]
For  $f\in B_{p,\omega}$  we have
\begin{equation*}
\|f\|_{B_{p,\omega}}\le c_p  \|\mathrm {Re} f \|_{b_{p,\omega}}.
\end{equation*}
\end{theorem}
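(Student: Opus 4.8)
The plan is to deduce this statement from the universal Riesz-type theorem proved just above (the one for $A_p(\omega)$) by applying it not to $f$ itself but to its derivative. The crucial observation is that differentiation is an operator $A=\frac{d}{d\zeta}$ carrying the analytic Besov space $B_{p,\omega}$ into the weighted analytic space $A_p(\omega)$: indeed, $f\in B_{p,\omega}$ means precisely $f'\in A_p(\omega)$, and $f'$ is again analytic on $D$. Taking $X=B_{p,\omega}$ and this $A$ in the previous theorem, the hypothesis that $\mathrm{Re}\,A$ maps into $a_p(\omega)$ is automatic, since $\mathrm{Re}\,f'$ is the real part of the analytic function $f'$.

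First I would record the pointwise inequality supplied by the previous theorem, applied to $g=f'$: namely $\|f'\|_{p,\omega}\le c_p\|\mathrm{Re}\,f'\|_{p,\omega}$, with $c_p$ the sharp Verbitsky constant. Exactly as there, one normalizes by assuming $\mathrm{Re}\,f'(0)\ge 0$ (otherwise replace $f$ by $-f$, which changes neither $\|f'\|_{p,\omega}$ nor $\|\mathrm{Re}\,f'\|_{p,\omega}$), so that the subharmonic-minorant term integrates to a nonnegative quantity by the mean value inequality, as in the proof of the universal theorem. This is the only place the radial symmetry of $\omega$ and the sign convention enter, and both are inherited verbatim from that argument.

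The remaining step is to translate the two sides back into Besov language. By definition $\|f\|_{B_{p,\omega}}=\|f'\|_{p,\omega}$, which handles the left-hand side immediately. For the right-hand side I would use that, writing $f=U+iV$ with $U=\mathrm{Re}\,f$, the Cauchy--Riemann equations give $f'=U_x-iU_y$, whence $\mathrm{Re}\,f'=U_x=\partial_x(\mathrm{Re}\,f)$; the pair $(U_x,-U_y)$ is itself a conjugate harmonic pair, and the harmonic Besov seminorm of $U$ is exactly $\|\mathrm{Re}\,f'\|_{p,\omega}$. Substituting these identifications turns the inequality above into $\|f\|_{B_{p,\omega}}\le c_p\|\mathrm{Re}\,f\|_{b_{p,\omega}}$, which is the assertion.

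The step I expect to require the most care is the last one: matching the intrinsic definition of the harmonic Besov norm $\|\cdot\|_{b_{p,\omega}}$ with the quantity $\|\mathrm{Re}\,f'\|_{p,\omega}$ that the reduction produces, so that the constant is genuinely $c_p$ rather than the trivial bound coming from $|f'|=|\nabla U|$. Everything analytic (membership of $f'$ in $A_p(\omega)$, the sign normalization, the subharmonicity of the minorant) is a transcription of the already-established universal theorem and presents no new difficulty; the real content is precisely that differentiation intertwines the analytic and harmonic Besov scales, so that the sharp conjugation constant $c_p$ passes from the function level to the derivative level unchanged.
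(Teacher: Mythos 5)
Your proposal is correct and is essentially the paper's own proof: the paper likewise treats the Besov case by applying the pointwise inequality $|z|^p\le c_p^p|\mathrm{Re}\,z|^p-b_p s(z)$ (i.e.\ the machinery of the preceding universal theorem, including the sign normalization and the subharmonic-minorant integration against the radial weight) to $z=f'(\zeta)$ and then translating back via the Cauchy--Riemann relation between $f'$ and $\nabla\,\mathrm{Re}\,f$. The one caveat is your final identification: the harmonic Besov seminorm of $U=\mathrm{Re}\,f$ is taken in the paper to involve the full gradient, so the correct statement is the inequality $\|\mathrm{Re}\,f'\|_{p,\omega}=\|U_x\|_{p,\omega}\le\|\nabla U\|_{p,\omega}=\|\mathrm{Re}\,f\|_{b_{p,\omega}}$ rather than an equality; since this inequality points in the direction you need, your chain of estimates still yields the theorem unchanged.
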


\begin{proof}
The  proof  goes in the  similar way  as the  proof of the  above   theorem if we have in mind  the relation  $\mathrm {Re} f'(z) = \nabla
\mathrm {Re} f(z)$.  Indeed, if in the inequality $|z|^p \le c_p^p  |\mathrm {Re} z|^p -  b_p s(z)$, set $f'(\zeta)$ we obatin
\begin{equation*}
|f'(\zeta)|^p\le c_p^p  |\mathrm {Re}  f'(\zeta)|^p - b_p s ( f'(\zeta) )
\end{equation*}
i.e. if we use the above relation we obtain
\begin{equation*}
|f'(\zeta)|^p\le c_p^p   |\nabla \mathrm {Re} f(\zeta)|^p - b_p s ( f'(\zeta) )
\end{equation*}
Now,  if we integrate with respect  to the $D$ as in the  preceding theorem  we obtain
\begin{equation*}
\|f\|_{B_{p,\omega}}\le c_p  \|\mathrm {Re}  f \|_{b_{p,\omega}},
\end{equation*}
which is the   statement of our theorem.
\end{proof}

Riesz's theorem for the   Bergman spaces was proves by Forelli and Rudin in 1974 and for the  Bloch space recently   by Kalaj and Markovi\'{c}
\cite{KALAJ.MARKOVIC.MS}.

\subsection{}
Here  prove M. Riesz theorem on conjugate real-valued harmonic function in a domain  of $\mathbf{C}$   that belong to so called Lumer's Hardy spaces.
We will  say something  on that  spaces firstly.

There  are  generalizations of Hardy spaces for  arbitrary domains in $\mathbf{C}$.  The generalizations  we consider in this  subsection are known
as  Lumer's Hardy spaces  \cite{DUREN.BOOK, FISHER.BOOK, LUMER.PARIS, LUMER-NAIM, RUDIN.TAMS}. The harmonic Lumer's Hardy  space $(Lh)^p(\Omega)$
contains all harmonic  functions  $U$  on a domain $\Omega\subseteq \mathbf{C}$ such that  the subharmonic function $|U|^p$ has a harmonic majorant
on $\Omega$.  In that case, the  function  $|U|^p$ has the  least harmonic majorant on $\Omega$. Let it be denoted by  $H_U$. For $\zeta_0\in\Omega$
one introduces a  norm  on  $(Lh)^p (\Omega)$  in the  following way
\begin{equation}\label{EQ.LN}
\|U\|_ {p,\zeta_0} = H_U ^{1/p} (\zeta_0).
\end{equation}
It is known that the   different  norms on  $(Lh)^p(\Omega)$  that arise by  selecting different elements of the  domain $\Omega$ are mutually equivalent.
The analytic Lumer's Hardy  space   $(LH)^p(\Omega)$ is the subspace  of  $(Lh)^p(\Omega)$   that  consists of all analytic functions. The two spaces
$(Lh)^p(\mathbf{U})$ and $h^p$  coincide (as do $(LH)^p(\mathbf{U})$ and $H^p$). The norms on these spaces are the same, if we select $\zeta_0 =0$ for the
Lumer  case. Moreover,  if $\Omega$ is a simply connected domain such that  $\partial\Omega$ is  a Jordan curve which is sufficiently smooth, the Lumer's
Hardy space   $(LH)^p (\Omega)$ is the same as the  Smirnov's Hardy space $E^p(\Omega)$ which  is defined by requirement that the integral means of an
analytic  function over   certain  family  of curves    in the domain $\Omega$   remains  bounded -- we  refer to the  tenth  chapter in \cite{DUREN.BOOK}.
Therefore, we have $(LH)^p(\mathbf{U})=E^p(\mathbf{U})= H^p$. This follows  from  Theorem 10.2 in  \cite{DUREN.BOOK}  which  gives a sufficient criterion
for coincidence of analytic Lumers's and   Smirnov's Hardy  spaces. This criterion  may  be  easily adapted in order  to conclude that the two types of
harmonic  spaces in a sufficiently smooth  domain  coincides.

Note that  Lumer's Hardy  spaces are conformally invariant. In other words, if   $\Phi$ is a conformal mapping of a domain $\tilde{\Omega}$ onto $\Omega$,
then   $U\in   (Lh)^p(\Omega)$ if and only if  $\tilde{U}=U\circ\Phi \in Lh)^p (\tilde{\Omega})$. The mapping $\Phi$ induces  an  isometric  isomorphism
$U\rightarrow\tilde{U}$ of the space $(Lh)^p(\Omega)$ onto $(Lh)^p (\tilde{\Omega})$, since the equality for the least harmonic majorants $H_U \circ\Phi
= H_{\tilde{U}}$ implies that $\|U\|_{p,\zeta_0}=\|\tilde{U}\|_{p,\tilde{\zeta_0}}$, where $\tilde{\zeta_0} \in \tilde{\Omega}$ satisfies  $\zeta_0=
\Phi (\tilde{\zeta_0})$.

We have   conjectured in \cite{MARKOVIC.AMM} that the M. Riesz  theorem with  the   Verbitsky   constant  $c_p$
is valid  in the case of Lumer's Hardy spaces  $(LH)^p(\Omega)$  for every   $p\in (1,\infty )$  and    $\Omega
\subseteq\mathbf{C}$. However, we prove this   conjecture  for analytic functions with  the positive real  part.
Our  aim in this section  is to prove the Riesz       theorem for real-valued            harmonic functions  in
the Lumer's Hardy space $(Lh)^p(\Omega)$ for  which  there  exists a conjugate  with the best possible constant
it  the case of positive harmonic functions on $\Omega$.
Because  of duality we consider the case $1<p<2$.  This is the content  of the  following theorem.

\begin{theorem}\label{TH.RIESZ.LUMER}
Let  $\Omega\subseteq  \mathbf{C}$ be a domain and $\zeta_0\in \Omega$. Assume  that for a positive  $U \in
(Lh)^p(\Omega)$  there exists a harmonic conjugate of $U$  on the  domain $\Omega$, denoted by $V$, and let
it be normalized such that $V(\zeta_0)=0$. Then  we   have the  Riesz  inequality
\begin{equation}\label{EQ.R}
\|U+iV\|_{p,\zeta_0}\le   c_ p\|U\|_{p,\zeta_0},
\end{equation}
where  the best  possible  constant is the Verbitsky constant if $U$ is positive.
\end{theorem}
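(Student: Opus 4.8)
The plan is to transplant the classical circle-integration proof of the M.~Riesz theorem to an arbitrary domain, replacing the Poisson/mean-value averaging (which is unavailable on a general $\Omega$) by the least harmonic majorant machinery that defines the Lumer norm \eqref{EQ.LN}. The engine is exactly the pointwise inequality already used above: for $1<p<2$ and every $z$ with $\mathrm{Re}\,z>0$ one has
\begin{equation*}
|z|^p\le c_p^p(\mathrm{Re}\,z)^p-b_p\,\mathrm{Re}(z^p),\qquad b_p=\tan\tfrac{\pi}{2p}>0,
\end{equation*}
with $c_p=\sec\frac{\pi}{2p}$. The crucial feature I would exploit is that here the subharmonic correction term is the \emph{harmonic} function $\mathrm{Re}(z^p)$ on the right half plane, so the minorant construction produces a genuinely harmonic majorant rather than merely a superharmonic one.

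Write $F=U+iV$, which is single valued and analytic on $\Omega$ by hypothesis. Since $U>0$, the map $F$ sends $\Omega$ into the right half plane; in particular $F$ is nonvanishing, and because its image lies in a simply connected region avoiding $0$ and the negative axis, the principal branch furnishes a single valued analytic $F^p=e^{p\log F}$ on $\Omega$ with no monodromy. Applying the pointwise inequality at $z=F(\zeta)$ and then replacing $U^p$ by its least harmonic majorant $H_U$ (which exists because $U\in(Lh)^p(\Omega)$) yields
\begin{equation*}
|F(\zeta)|^p\le c_p^p\,U(\zeta)^p-b_p\,\mathrm{Re}\big(F(\zeta)^p\big)\le c_p^p\,H_U(\zeta)-b_p\,\mathrm{Re}\big(F(\zeta)^p\big)=:G(\zeta).
\end{equation*}
The function $G$ is harmonic on $\Omega$, being a linear combination of the harmonic functions $H_U$ and $\mathrm{Re}(F^p)$. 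Hence $|F|^p$ admits a harmonic majorant, so $F\in(LH)^p(\Omega)$, and its least harmonic majorant satisfies $H_F\le G$ on $\Omega$.

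It remains to evaluate at $\zeta_0$. The normalization $V(\zeta_0)=0$ forces $F(\zeta_0)=U(\zeta_0)>0$ to be real and positive, so $\mathrm{Re}\big(F(\zeta_0)^p\big)=U(\zeta_0)^p>0$ and the correction term is nonpositive exactly at the base point. Therefore
\begin{equation*}
H_F(\zeta_0)\le G(\zeta_0)=c_p^p\,H_U(\zeta_0)-b_p\,U(\zeta_0)^p\le c_p^p\,H_U(\zeta_0),
\end{equation*}
and taking $p$-th roots together with \eqref{EQ.LN} gives precisely \eqref{EQ.R}. I expect the only real obstacle to be conceptual rather than computational: on a general $\Omega$ there is no Poisson kernel to integrate against, and the whole content is the observation that the harmonic correction $\mathrm{Re}(F^p)$ builds a harmonic majorant directly, whose value at $\zeta_0$ is then read off from $V(\zeta_0)=0$. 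The one genuine hypothesis being used throughout is the positivity of $U$, which is what keeps $F$ in a half plane and thereby guarantees both the validity of the half-plane pointwise inequality and the single valuedness of $F^p$.

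For the sharpness claim it suffices to specialize to $\Omega=\mathbf{U}$ and $\zeta_0=0$, where $(LH)^p(\mathbf{U})=H^p$ and \eqref{EQ.R} reduces to Verbitsky's theorem with the constant \eqref{EQ.VC}, known to be optimal. As extremizers one may take the conformal maps $g_\gamma(z)=\big(\tfrac{1+z}{1-z}\big)^{2\gamma/\pi}$ of $\mathbf{U}$ onto the sector $\{\,|\arg w|<\gamma\,\}$ with $\gamma\uparrow\frac{\pi}{2p}$ (the threshold for $g_\gamma\in H^p$); since $\frac{\pi}{2p}<\frac{\pi}{2}$ whenever $p>1$, these maps have strictly positive real part, so the family stays inside the admissible positive class and shows that $c_p$ cannot be improved under the standing positivity assumption on $U$.
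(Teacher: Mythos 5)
Your proposal is correct and follows essentially the same route as the paper's own proof: the same half-plane inequality $|z|^p\le c_p^p(\Re z)^p-b_p\,\Re(z^p)$ applied to $F=U+iV$, the same construction of the harmonic majorant $c_p^p H_U-b_p\,\Re(F^p)$ giving $H_F\le c_p^p H_U-b_p\,\Re(F^p)$, and the same evaluation at $\zeta_0$ using $F(\zeta_0)=U(\zeta_0)>0$, with sharpness referred back to Verbitsky on the disk. Your explicit remarks on the single-valuedness of $F^p$ (via the image lying in the right half-plane) and on the extremizers $g_\gamma$ remaining in the positive class are details the paper leaves implicit, but they do not change the argument.
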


\begin{proof}
In \cite{MARKOVIC.AMM} the case $p =2$ is considered. Now, we can adapt the   proof for $p=2$ given there
\cite{MARKOVIC.AMM}.  Recall that  in  \cite{MARKOVIC.AMM}   we have used  the following elementary equality,
which is  easy to check:
\begin{equation}\label{EQ.Z}
|z|^2  = 2 (\Re z)^2 - \Re z^2,\quad z\in\mathbf{C}.
\end{equation}
For     $p\ne 2$ we should use an inequality in order     to replace  the  equality  given above.

If  $1<p<2$  for     $c_p^p  =  ( {\cos \frac \pi{2p}} )^{-p} $ we have the  inequality
\begin{equation}\label{EQ.EQP}
|z|^p\le c_p^p  (\Re z) ^p - h(z),
\end{equation}
where    $h(z)$ is a   harmonic  function in the domain  $\{z:\Re z >0 \}$, which satisfies $h(x) >0 $ for  every
$x>  0$; actually, the function  $h $  is given by $h(z) = \Re  (z^p)$.
For the proof see the Kalaj's paper \cite{KALAJ.TAMS}, or the  Verbitsky work.

Assume first that $U>0$.
Let   the analytic function    $U + iV$ be denoted  by  $F$, and let $H_U$ be  the  least harmonic majorant
of  the subharmonic function  $|U |^p$ on  $\Omega$. By applying inequality  \eqref{EQ.EQP} for $z  = F(\zeta)$,
$\zeta\in \Omega$,                     we obtain
\begin{equation*}\begin{split}
|F(\zeta)|^p&\le c_p^p  (\Re F(\zeta))^2 - h (F(\zeta))=  c_p^p  |U(\zeta) | ^2 -  h (F(\zeta))
 \\& \le c_p^p H_U (\zeta) -    h (F(\zeta)),
\end{split}\end{equation*}
which proves that $c_p^p H_U (\zeta) -    h (F(\zeta))$   is a harmonic   majorant  of  $| F |^p$ on $\Omega$.
It follows that $F\in (LH)^p(\Omega)$. Moreover, if $H_F$ is the least harmonic majorant     of  $|F|^p$ on
$\Omega$, we have
\begin{equation*}
H_F (\zeta)   \le c_p^p H_U (\zeta) -    h (F(\zeta)).
\end{equation*}
Since $F(\zeta_0) = U(\zeta_0)$ is a positive   real  number,  we  obtain
\begin{equation*}\begin{split}
\|F\|^p_{p,\zeta_0} & = H_F(\zeta_0)  \le  c_p^p H_U (\zeta_0 ) -    h (F(\zeta_0))
 \le c_p^p H_U(\zeta_0)   =  c_p^p   \|U\|^2_{p,\zeta_0}.
\end{split}\end{equation*}
Finally, we conclude that
\begin{equation*}
\|F\|_{p,\zeta_0} \le c_p \|U\|_{p,\zeta_0},
\end{equation*}
which is what we wanted to prove.

The sharpness of $c_p$ in this case   follows from the Verbitsky  result.
\end{proof}

We give now here a proof of the Riesz inequality  but  with a constant   different   from   the conjectured one.
We use the similar   approach as in the  Pavlovi\'{c} paper \cite{PAVLOVIC.PAMS}. Assume that   the boundary of
the domain $\partial \Omega$ is   smooth. Then  $U$ at the boundary may be  decomposed as $U ^*= U_1 ^*- U_2^*$,
where $U_1^*$ and $U_2^*$ are positive decomposition. Let $U_1$ and $U_2$ be positive harmonic  such that their
boundary functions on  $\partial \Omega$ are  $U_1^*$ and $U_2^*$.      We have $\|U^*\|_p ^p = \|U^*_1\|_p^p +
\|U ^* _2\|_p^p $. Since   $\|F  \|_p ^p=   \|F_1-F_2\|_p^p\le  2^{p}(\|F_1\|_p^p+\|F_2\|_p^p)$, it follows that
$\|F\|_p \le 2c_p \|U \|_p$.

Note that the constant $c_p$ in the Riesz  inequality  \eqref{EQ.R}    does not depend  on $\zeta_0\in
\Omega$,  although the  norm of a function  in the  Lumer's  Hardy space $(Lh)^p(\Omega)$ does. If $\Omega$
is a  simply connected  domain with at least two boundary points, this  is  expected, since the       group
of all conformal automorphisms  of the domain         $\Omega$ acts transitively on $\Omega$, i.e., for any
$\tilde{\zeta}_0\in\Omega$ there exists a  conformal automorphism   $\Phi$ of $\Omega$            such that
$\Phi(\tilde{\zeta}_0)  = \zeta_0$.    As we have already  said, the  mapping $\Phi$ induces  an  isometric
isomorphism of $(Lh)^p(\Omega)$ onto itself.  However, for multi-connected domains     it is  not true,  in
general, that the group  of all   conformal  automorphisms   acts  transitively on a domain.

In seventies,    Stout \cite[Theorem  IV.1]{STOUT.AJM}  proved   Riesz's  theorem  for Lumer's Hardy spaces
$(LH)^p(\Omega)$ on       $\mathcal {C}^2$-smooth domains $\Omega\subseteq \mathbf{C}^n$ (without a precise
constant in the Riesz inequality).    In this case there exists an integral representation  of  the Lumer's
norm of an    analytic function  that   is  used in order  to obtain the result.

Lumer's Hardy spaces $(Lh)^p (\Omega)$ and $(LH)^p (\Omega)$ on   domains $\Omega$ in $\mathbf{C}^n$    are
defined in a similar way  as in the one-dimensional case \cite{LUMER.PARIS}.        However, instead of the
harmonic     majorant  we have to use a pluriharmonic majorant, i.e., a function that is   locally the real
part of an  analytic function on  $\Omega$. Therefore, the  Lumer's Hardy space  $(Lh)^p (\Omega)$ contains
all pluriharmonic functions $U$ on $\Omega$  such that $ |U|^p$ has a pluriharmonic majorant    on $\Omega$.
The analytic Lumer's  Hardy space $(LH)^p (\Omega)$ is the subspace of  $(Lh)^p (\Omega)$     that consists
of all analytic functions.  The norm on $(Lh)^p (\Omega)$  may be introduced with respect to  any  $\zeta_0
\in\Omega$ using  the least  pluriharmonic majorant as  in  the ordinary case \eqref{EQ.LN}.

The proof given above works for any dimension.      Therefore, we have  the  dimension-free constant in the
Riesz inequality which is, moreover,    valid for all domains in $\mathbf{C}^n$.

\section{On vector-valued inequalities}

\subsection{}
Let $\tilde {D}$  be a domain in  $\mathbf{R}^n$ which   contains $0$ and has the property  that any intersection
with the plane containing $0$ is a same  domain  $D$. For  example $\tilde {D}$ may be the unit ball, the  closed
unit ball, or  the whole space.   We will use the following general principle for transferring  inequalities  for
functions defined  in the  domain  ${D}$   to inequalities         for functions in the domain  $\tilde{D}$.

\begin{lemma}
Let  $\mathcal{L}(x,y,s,t)$ and $\mathcal {R}(x,y,s,t)$ be  real-valued functions  defined for $x\ge 0$, $y\ge 0$
and $s\ge 0$, and   real $t$.  Let $\mathcal {C}$ be a class of mappings in $D$ such that
\begin{equation*}
\mathcal{L}(|f(\zeta)|, |f(\eta)|,|f(\zeta) - f(\eta)|,\left<f(\zeta),f(\eta)\right>)\le
\mathcal {R}(|\zeta|, |\eta|, |\zeta-\eta|,\left<\zeta,\eta\right>)
\end{equation*}
for  every          $f\in\mathcal{C}$ and $u\in D$ and $v\in D$.  Here $\left<\cdot,\cdot\right>$ stands for the
scalar product     in   $\mathbf{R}^n$.

Assume that $\tilde {\mathcal {C}}$ is a   class of mappings in the  domain  $\tilde{D}$ which  has the  property
that any  restriction of a function in $\tilde{C}$ composed by  a  projections        is a mapping  in the  class
$\mathcal {C}$). Then     the above  inequality ($\mathcal {L}\le \mathcal {R}$) remains   to  hold for  mappings
in    the  class  $\tilde {\mathcal {C}}$.
\end{lemma}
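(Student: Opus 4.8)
The plan is to prove this by a pure slicing argument: every two-point configuration in $\tilde D$ together with the origin lives inside a single planar slice, and on that slice the hypothesis already supplies the inequality. Fix a mapping $F\in\tilde{\mathcal C}$ and two points $a,b\in\tilde D$; the goal is to establish
\begin{equation*}
\mathcal L\big(|F(a)|,|F(b)|,|F(a)-F(b)|,\langle F(a),F(b)\rangle\big)\le \mathcal R\big(|a|,|b|,|a-b|,\langle a,b\rangle\big).
\end{equation*}
Since $0\in\tilde D$, the linear span of $a$ and $b$ has dimension at most $2$; I would choose a two-dimensional subspace $\Pi\ni a,b$ through the origin (enlarging the span arbitrarily in the degenerate case where $a,b$ are linearly dependent). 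By the standing hypothesis on $\tilde D$, the slice $\Pi\cap\tilde D$ is an isometric copy of $D$.

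Next I would fix the isometric identification $\iota\colon D\to\Pi\cap\tilde D$, a restriction to $D$ of a linear isometry of $\mathbf R^2$ onto $\Pi$ sending $0$ to $0$, and pick $\zeta,\eta\in D$ with $\iota(\zeta)=a$, $\iota(\eta)=b$. Because $\iota$ is a linear isometry, the four arguments on the right-hand side are preserved: $|\zeta|=|a|$, $|\eta|=|b|$, $|\zeta-\eta|=|a-b|$, and $\langle\zeta,\eta\rangle=\langle a,b\rangle$, the last because $\Pi$ carries the induced inner product.

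I would then set $f:=F\circ\iota$, read as a mapping on $D$ and composed with the projection furnished in the definition of $\tilde{\mathcal C}$; by that defining property, $f\in\mathcal C$. Restriction leaves values untouched, so $f(\zeta)=F(a)$ and $f(\eta)=F(b)$, and hence the four codomain quantities $|f(\zeta)|$, $|f(\eta)|$, $|f(\zeta)-f(\eta)|$, $\langle f(\zeta),f(\eta)\rangle$ coincide with those built from $F(a),F(b)$. Applying the hypothesis inequality to $f\in\mathcal C$ at the points $\zeta,\eta$ and substituting these eight preserved quantities yields exactly the asserted inequality for $F$ at $a,b$. As $F,a,b$ were arbitrary, the lemma follows.

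The bulk of this is bookkeeping about linear isometries; I expect the only two points requiring genuine care to be, first, the degenerate collinear case, which is harmless because the hypothesis on $\tilde D$ holds for \emph{every} two-plane through $0$, so an identification $\iota$ is available regardless of how $\Pi$ is enlarged; and second, checking that the codomain projection hidden in the definition of $\tilde{\mathcal C}$ does not disturb the four codomain quantities. For the latter I would take the orthogonal projection onto the span of $F(a)$ and $F(b)$: it fixes both vectors, hence preserves their norms, their difference, and their inner product, so no codomain quantity is altered. This second point is the single place where the argument is more than formal, and it is exactly what makes the vector-valued constant coincide with the scalar one.
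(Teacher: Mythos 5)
Your argument is correct and is essentially the paper's own proof: both slice the domain by the plane through $0$, $a$, $b$, identify that slice isometrically with $D$, project the codomain orthogonally onto the plane spanned by $F(a)$ and $F(b)$ (which fixes both image vectors, hence all four codomain quantities), and then invoke the planar hypothesis for the resulting mapping in $\mathcal{C}$. Your explicit treatment of the degenerate collinear case is a small point of care the paper leaves implicit, but the route is the same.
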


\begin{proof}
Choose  any mapping  $f\in\tilde{\mathcal{C}}$.    Let  $\zeta$ and $\eta$  be   fixed in the  domain $\tilde{D}$,
and denote $\Phi =  f(\zeta)$ and $\Psi=f(\eta)$. Our aim is to show that
\begin{equation*}
\mathcal {L}(|\Phi|,|\Psi|,|\Phi-\Psi|,\left<\Phi,\Psi\right>)
\le \mathcal{R}(|\zeta|,|\eta|,|\zeta-\eta|,\left<\zeta,\eta\right>).
\end{equation*}
Let  $D_{\zeta,\eta}$    be  the  intersection  of the domain $\tilde{D}$ and the plane $P _{\zeta,\eta}$,  which
contains $\zeta$, $0$ and $\eta$, and let  $P_{\Phi,\Psi}$   be the  plane which contains  $\Phi$, $0$ and $\Psi$.
Denote by $\Pi_{\Phi,\Psi}$    the orthogonal  projection   mapping of the   space  $\mathbf{R}^n$ onto the plane
$P_{\Phi,\Psi}$.   We will consider  the  mapping  which is  composition of  the restriction  of our mapping  $f$
on the domain  $D_{\zeta,\eta}$, denoted by $f|_{D_{\zeta,\eta}}$, and the above described projection  $\Pi_{\Phi,
\Psi}$. Therefore, let $\phi  =  \Pi _{\Phi,\Psi}  \circ  f|_{D_{\zeta,   \eta}}$. We have $\phi(\zeta)=\Phi$ and
$\phi(\eta) =\Psi$. Let $A_1$ and $A_2$ be orthogonal  which map   the   domain in the  plane $D$ onto  $D_{\zeta,
\eta}$  and  $D_{\Phi,\Psi}$,  respectively.   Since  $\phi: D_{\zeta,\eta}\rightarrow D_{\Phi,\Psi}$,   then the
mapping $A_2^{-1}\circ \phi\circ  A_1$ maps the domain $D$   into the plane. If we apply the assumed  fact at the
mapping  $\tilde{\phi}           =A_2^{-1}\circ\phi\circ A_1:D\rightarrow \mathbf{R}^2$ for points $\tilde{\zeta}
=A_1^{-1}(\zeta)$  and $\tilde{\eta}  =  A_1^{-1}(\eta)$, we obtain
\begin{equation*}
\mathcal {L}(|\tilde{\phi} (\tilde{\zeta})|, |\tilde{\phi} ( \tilde{\eta})|, |\tilde{\phi} (\tilde{\zeta}) - \tilde{\phi}
(\tilde{\eta})|,\left<\tilde{\phi} (\tilde{\zeta}),\tilde{\phi} (\tilde{\eta})\right>)\le
\mathcal {R}(|\tilde{\zeta}|, |\tilde {\eta}|,|\tilde {\zeta}-\tilde{\eta}|,\left<\tilde{\zeta},\tilde{\eta}\right>).
\end{equation*}
Since  $A_1^{-1}$ is distance and angle   preserving,  we obtain
\begin{equation*}
\mathcal {R}(|\tilde{\zeta}|, |\tilde {\eta}|, |\tilde {\zeta}-\tilde{\eta}|,\left<\tilde{\zeta},\tilde{\eta}\right>)  =
\mathcal {R}(|\zeta|,|\eta|,|\zeta-\eta|,\left<\zeta,\eta\right>).
\end{equation*}
Since $A_2$ is also  distance and angle  preserving, we also have
\begin{equation*}\begin{split}
&\mathcal {L}(|\tilde{\phi} (\tilde{\zeta})|, |\tilde{\phi} ( \tilde{\eta})|, |\tilde{\phi} (\tilde{\zeta})
 - \tilde{\phi} (\tilde{\eta})|,\left<\tilde{\phi} (\tilde{\zeta}),\tilde{\phi} (\tilde{\eta})\right>)
\\&= \mathcal {L}(|\phi (\zeta)|,|\phi(\eta)|,|\phi (\zeta)| - |\phi(\eta)|,\left<\phi (\zeta), \phi(\eta)\right>)
\\& = \mathcal {L}(|\Phi|,|\Psi|,|\Phi-\Psi|,\left<\Phi,\Psi\right>).
\end{split}\end{equation*}
Our inequality follows.
\end{proof}

\begin{corollary}
Let  $\mathcal{L}(x,y,s,t)$ and $\mathcal {R}(x,y,s,t)$ be  real-valued functions  defined for $x\ge 0$, $y\ge 0$
and $s\ge 0$, and   real $t$. Assume that we have inequality
\begin{equation*}
\mathcal{L}(|\zeta|, |\eta|,|\zeta- \eta|,\left<\zeta,\eta\right>)\le\mathcal {R}(|\zeta|,|\eta|, |\zeta-\eta|,\left<\zeta,\eta\right>)
\end{equation*}
for  every          $\zeta\in D\subseteq  \mathbf{R}^2$ and $\eta\in D \subseteq  \mathbf{R^2}$.

Then this inequality remains valid  for every  $\zeta\in  \tilde{D}\subseteq \mathbf{R}^n$ and  $\eta\in  \tilde{D}
\subseteq \mathbf{R}^n$.
\end{corollary}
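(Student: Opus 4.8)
The plan is to read this Corollary directly off the preceding Lemma by specializing both mapping classes to the trivial inclusion maps. Concretely, I would take $\mathcal{C}$ to consist of the single inclusion $\iota_D\colon D\hookrightarrow\mathbf{R}^2$, $\iota_D(\zeta)=\zeta$, and take $\tilde{\mathcal{C}}$ to consist of the single inclusion $\iota_{\tilde D}\colon\tilde D\hookrightarrow\mathbf{R}^n$. With this choice the hypothesis of the Lemma for the class $\mathcal{C}$,
$$\mathcal{L}(|\iota_D(\zeta)|,|\iota_D(\eta)|,|\iota_D(\zeta)-\iota_D(\eta)|,\langle\iota_D(\zeta),\iota_D(\eta)\rangle)\le\mathcal{R}(|\zeta|,|\eta|,|\zeta-\eta|,\langle\zeta,\eta\rangle),$$
becomes, after substituting $\iota_D(\zeta)=\zeta$ and $\iota_D(\eta)=\eta$, exactly the inequality assumed in the statement for points of $D\subseteq\mathbf{R}^2$.

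The only thing left to check is the compatibility condition linking $\tilde{\mathcal{C}}$ to $\mathcal{C}$, namely that restricting a map of $\tilde{\mathcal{C}}$ to a plane section and post-composing with the orthogonal projection lands back in $\mathcal{C}$. For $f=\iota_{\tilde D}$ one has $\Phi=f(\zeta)=\zeta$ and $\Psi=f(\eta)=\eta$, so the target plane $P_{\Phi,\Psi}$ through $\Phi,0,\Psi$ coincides with the source plane $P_{\zeta,\eta}$ through $\zeta,0,\eta$. Hence the orthogonal projection $\Pi_{\Phi,\Psi}$ acts as the identity on $D_{\zeta,\eta}\subseteq P_{\zeta,\eta}$, and the map $\phi=\Pi_{\Phi,\Psi}\circ f|_{D_{\zeta,\eta}}$ is simply the inclusion of $D_{\zeta,\eta}$ into itself. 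Choosing $A_2=A_1$ (both being the same isometry of the model plane onto $D_{\zeta,\eta}=D_{\Phi,\Psi}$), the conjugated map $\tilde\phi=A_2^{-1}\circ\phi\circ A_1$ is the identity on $D$, which indeed belongs to $\mathcal{C}$.

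With both hypotheses of the Lemma in place, its conclusion asserts that $\mathcal{L}\le\mathcal{R}$ holds for every map in $\tilde{\mathcal{C}}$, that is, for $\iota_{\tilde D}$ evaluated at arbitrary $\zeta,\eta\in\tilde D$, which is precisely the claimed inequality in $\mathbf{R}^n$. The single point deserving care — and the only potential obstacle — is the degenerate configuration in which $\zeta$, $0$, $\eta$ are collinear (for instance when $\zeta,\eta$ are parallel or one of them vanishes), where the plane $P_{\zeta,\eta}$ is not uniquely determined. There one may fix any supporting plane through the three points; since $\mathcal{L}$ and $\mathcal{R}$ depend on $\zeta,\eta$ only through the isometry invariants $|\zeta|$, $|\eta|$, $|\zeta-\eta|$, $\langle\zeta,\eta\rangle$, the resulting value is independent of this choice, so the argument carries over unchanged.
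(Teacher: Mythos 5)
Your proposal is correct and follows exactly the paper's own route: the paper proves this corollary by applying the preceding lemma to the class consisting of the single identity (inclusion) map, which is precisely your choice of $\mathcal{C}$ and $\tilde{\mathcal{C}}$. Your additional verifications — that the projection $\Pi_{\Phi,\Psi}$ reduces to the identity on the plane section, and that degenerate collinear configurations cause no harm since $\mathcal{L}$ and $\mathcal{R}$ depend only on isometry invariants — merely spell out details the paper leaves implicit.
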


\begin{proof}
We have just to apply the above lemma for the class which consists of only one function  which is identity.
\end{proof}

This corollary may be reformulated  as follows

\begin{corollary}
Let  $\mathcal{L}(x,y,s,t)$ and $\mathcal {R}(x,y,s,t)$ be  real-valued functions  defined for $x\ge 0$, $y\ge 0$
and $s\ge 0$, and   real $t$. Assume that we have inequality
\begin{equation*}
\mathcal{L}(|\zeta|, |\eta|,|\zeta \pm  \eta|,\angle(\zeta,\eta) )\le\mathcal {R}(|\zeta|,|\eta|, |\zeta\pm\eta|,\angle(\zeta,\eta))
\end{equation*}
for  every          $\zeta\in D\subseteq  \mathbf{R}^2$ and $\eta\in D \subseteq  \mathbf{R^2}$.

Then this inequality remains valid  for every  $\zeta\in  \tilde{D}\subseteq \mathbf{R}^n$ and  $\eta\in  \tilde{D}
\subseteq \mathbf{R}^n$.
\end{corollary}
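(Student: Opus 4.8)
The plan is to reduce this statement to the preceding corollary, which has already been established by applying the transfer lemma to the identity mapping. The key observation is that all four arguments fed into $\mathcal{L}$ and $\mathcal{R}$ here, namely $|\zeta|$, $|\eta|$, $|\zeta\pm\eta|$ and $\angle(\zeta,\eta)$, are themselves functions of the three basic orthogonal invariants $|\zeta|$, $|\eta|$ and $\langle\zeta,\eta\rangle$. Indeed, by the law of cosines,
\begin{equation*}
|\zeta\pm\eta|^2 = |\zeta|^2 \pm 2\langle\zeta,\eta\rangle + |\eta|^2,
\end{equation*}
so that $|\zeta+\eta| = \sqrt{|\zeta-\eta|^2 + 4\langle\zeta,\eta\rangle}$, while
\begin{equation*}
\angle(\zeta,\eta) = \arccos\frac{\langle\zeta,\eta\rangle}{|\zeta|\,|\eta|}
\end{equation*}
whenever $\zeta,\eta\neq 0$.

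First I would encode these two identities as a substitution. Writing the four slots of the preceding corollary as $(x,y,s,t) = (|\zeta|,|\eta|,|\zeta-\eta|,\langle\zeta,\eta\rangle)$, I define new four-variable functions
\begin{equation*}
\mathcal{L}^{\ast}(x,y,s,t) = \mathcal{L}\Bigl(x,\,y,\,\sqrt{s^2+4t},\,\arccos\tfrac{t}{xy}\Bigr),
\end{equation*}
and likewise $\mathcal{R}^{\ast}$ from $\mathcal{R}$, for the choice of the $+$ sign (for the $-$ sign one simply leaves the third slot equal to $s$). By the two displayed identities, for every $\zeta,\eta$ one has
\begin{equation*}
\mathcal{L}^{\ast}(|\zeta|,|\eta|,|\zeta-\eta|,\langle\zeta,\eta\rangle) = \mathcal{L}(|\zeta|,|\eta|,|\zeta+\eta|,\angle(\zeta,\eta)),
\end{equation*}
and the analogous equality for $\mathcal{R}^{\ast}$.

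Next, the hypothesis of the present corollary says exactly that $\mathcal{L}^{\ast}(\cdot)\le\mathcal{R}^{\ast}(\cdot)$ holds for all $\zeta,\eta\in D\subseteq\mathbf{R}^2$. This is precisely an inequality of the type treated in the preceding corollary, now with $\mathcal{L}^{\ast}$, $\mathcal{R}^{\ast}$ in the roles of $\mathcal{L}$, $\mathcal{R}$. Applying that corollary transfers $\mathcal{L}^{\ast}\le\mathcal{R}^{\ast}$ to all $\zeta,\eta\in\tilde{D}\subseteq\mathbf{R}^n$, and translating back through the two equalities above yields $\mathcal{L}(|\zeta|,|\eta|,|\zeta+\eta|,\angle(\zeta,\eta))\le\mathcal{R}(|\zeta|,|\eta|,|\zeta+\eta|,\angle(\zeta,\eta))$ for all such $\zeta,\eta$; the same argument with the $-$ sign handles the other choice. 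This completes the reduction.

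The only point requiring a little care, and the main (albeit minor) obstacle, is the degenerate configuration $\zeta=0$ or $\eta=0$, where the $\arccos$ formula for $\angle(\zeta,\eta)$ breaks down. Here I would either adopt the usual convention for the angle in this case or observe that the orthogonal maps $A_1,A_2$ appearing in the transfer lemma preserve this degeneracy, so that the substitution and the conclusion extend by the very same invariance. Since the substitution functions are otherwise well-defined and the whole argument rests only on the orthogonal invariance of norms, differences, scalar products, and hence of $|\zeta\pm\eta|$ and $\angle(\zeta,\eta)$, no further work is needed.
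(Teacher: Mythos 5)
Your proof is correct and takes essentially the same route as the paper: the paper offers no separate argument, presenting this corollary as a mere reformulation of the preceding one, and your explicit substitution expressing $|\zeta\pm\eta|$ and $\angle(\zeta,\eta)$ through the invariants $|\zeta|$, $|\eta|$, $|\zeta-\eta|$, $\langle\zeta,\eta\rangle$ (via $|\zeta+\eta|^2=|\zeta-\eta|^2+4\langle\zeta,\eta\rangle$ and the $\arccos$ formula) is precisely the content of that reformulation, with the degenerate case $\zeta=0$ or $\eta=0$ handled by convention as you note.
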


This may be reformulates as:   if a certain inequality holds for plane vectors and depends  only on   norm of vectors
and angle between them,  then  the same inequality holds for vectors in the space. In other words if $\mathcal {F} (|z|, |w|,|z\pm w|,\angle (z,w))\ge 0$, then we have $\mathcal {F} (|\zeta|,
|\eta|,|\zeta\pm \eta|,\angle (\zeta,\eta))\ge 0$ for $\zeta\in\mathbf{R}^n$ and $\eta\in\mathbf{R}^n$.

\subsection{}
The corollary given  above will be applied here for the vector-valued  inequalities we need.  We prove  the following
inequality for   $z$ and $w$ in $\mathbf{C}^n$.
\begin{equation*}
(\|z\|^2 + \|w\|^2)^{\frac p2}\le C_{2,s}\|z+ \overline {w}\|^p - s(z,w)
\end{equation*}
where  $s(z,w)$ is plurisubharmonic in $\mathbf{C}^n\times \mathbf{C}^n$. Indeed, the Kalaj's  inequality states that
\begin{equation*}
( |z|^2 + |w|^2) ^{\frac p2}\le C_{p,2} |z+\overline{w}|^p- b_{p,2} \Re \left<z,\overline {w}\right>^{p/2}
\end{equation*}
for $z\in \mathbf{C}$ and $w\in \mathbf{C}$  and  for $1<p<2$, where $b_{p,2}$ is a positive constant.   From this
inequality we  deduce   the vector-valued one:
\begin{equation*}
( \|z\|^2+\|w\|^2) ^{\frac p2}\le C_{p,2} \|z+\overline{w}\|^p - b_p \|z\|^{\frac p2}\|w\|^{\frac p2} \cos \frac p2 \angle(z, w).
\end{equation*}
Therefore, we can take   $s(z,w) =  b_p \|z\|^{\frac p2}\|w\|^{\frac p2} \cos \frac p2 \angle(z, w)$.

Hollenbeck and Verbitsky proved the following  inequality
\begin{equation*}
\max\{ |z|^p,|w|^p\}\le C_{p,\infty} |z+\overline{w}|- b_{p,\infty} \Re (zw)^{\frac p2}
\end{equation*}
for $1<p<2$.  Since this inequality depends only on $|z|$ and $|w|$ and the angle between them, we may conclude that
it holds for  $z\in\mathbf{C}^n$ and $w\in\mathbf{C}^n$.  The vector--valued Verbitsky inequality may be  written as
\begin{equation*}
\max\{ \|z\|^p,\|w\|^p\}\le  C_{p,\infty} \|z+\overline{w}\|- b_p \Re \left<z,\overline {w}\right>^{p/2}
\end{equation*}
here $\left<\cdot,\cdot\right>$ is the inner  product in $\mathbf{C}^n$.
Inequalities proved in this paper for any $0<s<2$ may be transferred for $z\in\mathbf{C}^n$ and $w\in \mathbf{C}^n$.

\end{document}